\documentclass[article,reqno]{amsart}
\usepackage[mathscr]{eucal}
\usepackage{amsmath,amsfonts,amsthm,amssymb,mathrsfs,url}
\usepackage{array}

\usepackage{hyperref}
\hypersetup{
    bookmarks=true,         
    unicode=false,          
    pdftoolbar=true,        
    pdfmenubar=true,        
    pdffitwindow=false,     
    pdfstartview={FitH},    
    pdftitle={Lorentzian polynomials on cones and the Heron-Rota-Welsh conjecture},    
    pdfauthor={Author},     
    pdfsubject={Subject},   
    pdfcreator={Creator},   
    pdfproducer={Producer}, 
    pdfkeywords={keyword1} {key2} {key3}, 
    pdfnewwindow=true,      
    colorlinks=true,       
    linkcolor=blue,          
    citecolor=blue,        
    filecolor=blue,      
    urlcolor=black    
    }

\theoremstyle{plain}
   \newtheorem{theorem}{Theorem}[section]
   \newtheorem{proposition}[theorem]{Proposition}
   \newtheorem{lemma}[theorem]{Lemma}

\theoremstyle{definition}
   \newtheorem{definition}{Definition}[section]
   \newtheorem{example}{Example}[section]

\theoremstyle{remark}
   \newtheorem{remark}[theorem]{Remark}

\author[P.~Br\"and\'en]{Petter Br\"and\'en}
\address{Department of Mathematics, KTH Royal Institute of Technology, SE-100 44 Stockholm,
Sweden}
\email{pbranden@kth.se}

\author[J.~Leake]{Jonathan Leake}
\address{Department of Mathematics, Technische Universit\"at Berlin, Germany}
\email{jonathan@jleake.com}



\numberwithin{equation}{section}
\newcommand{\xx}{\mathbf{x}}
\newcommand{\yy}{\mathbf{y}}
\newcommand{\zz}{\mathbf{z}}

\newcommand{\vv}{\mathbf{v}}
\newcommand{\ttt}{\mathbf{t}}
\newcommand{\ff}{\pol}

\newcommand{\ww}{\mathbf{w}}
\newcommand{\uu}{\mathbf{u}}

\newcommand{\MOD}{\mathscr{M}}
\newcommand{\SMOD}{\mathscr{C}}
\newcommand{\PP}{\mathscr{B}}
\newcommand{\PPP}{\mathscr{P}}

\newcommand{\EE}{\mathscr{E}}

\newcommand{\LL}{\mathscr{L}}

\newcommand{\MM}{\mathrm{M}}

\newcommand{\RR}{\mathbb{R}}

\newcommand{\CCC}{\mathscr{C}}
\newcommand{\WWW}{\mathscr{W}}

\def\newop#1{\expandafter\def\csname #1\endcsname{\mathop{\rm
#1}\nolimits}}

\newop{per}
\newop{diag}
\newop{supp}
\newop{rk}
\newop{Deg}
\newop{span}
\newop{rank}
\newop{Sym}
\newop{sign}
\newop{Int}
\newop{disc}
\newop{mult}
\newop{tr}
\newop{vol}
\newop{pol}
\newop{MAP}

\title[Lorentzian polynomials on cones]{Lorentzian polynomials on cones and the Heron-Rota-Welsh conjecture}

%
%

\begin{document}
\begin{abstract}We give a short proof of the log-concavity of the coefficients of the reduced characteristic polynomial of a matroid. The proof uses an extension of the theory of Lorentzian polynomials to convex cones, and reproves the Hodge-Riemann relations of degree one for the Chow ring of a matroid.
\end{abstract}
\maketitle
\thispagestyle{empty}

\section{Introduction}
Over the past decade, methods originating in algebraic geometry have been developed  to solve long-standing conjectures on unimodality and log-concavity in matroid theory, see \cite{Baker}. In \cite{AHK}, a Hodge theory of matroids was developed by Adiprasito, Huh and Katz to prove the Heron-Rota-Welsh conjecture on the log-concavity of the characteristic polynomial of a matroid. 

A different approach to log-concavity problems in matroid theory originates in the works of Choe, Oxley, Sokal, Wagner, Gurvits and the first author, see \cite{WSur}. This approach uses convexity properties of multivariate polynomials rather than Hodge theory. Recently breakthroughs  in this approach were made by Huh and the first author \cite{BH}, and by Anari, Liu, Oveis Gharan and Vinzant \cite{ALOV}. The theory of Lorentzian polynomials  was developed in \cite{ALOV, BH, Gu}, and used in \cite{ALOV,BH} to prove the strongest of Mason's conjectures on the log-concavity of the  number of independent sets of a matroid. 

In \cite{BES}, Backman, Eur and Simpson combined Lorentzian polynomials and Hodge theory to give a shorter proof of the Hodge-Riemann relations of degree one for the Chow ring of a matroid. The proof in Section 6 of \cite{BES} assumes Poincar\'e duality for the Chow ring of a matroid, which is proved in \cite{AHK} and in \cite[Section 4]{BES}. However, until now a purely “polynomial proof” of the Heron-Rota-Welsh conjecture -- using the theory of Lorentzian polynomials and avoiding Hodge theory altogether -- has been missing. The main purpose of this paper is to give such a proof.

In Section \ref{Lor-cones} we extend the theory of Lorentzian polynomials so that it applies to cones other than the positive orthant (see Proposition~\ref{engine}), and use this extension to give a very short proof of the Heron-Rota-Welsh conjecture in Sections \ref{posetsec} and  \ref{lc} (see Theorem~\ref{HRWconj}), which does not rely on Hodge theory. In fact we give a self-contained proof of the Hodge-Riemann relations of degree one for the Chow ring of a matroid, see Theorem~\ref{mainChow} and Theorem~\ref{chowVol}.

\section{Lorentzian polynomials on cones}\label{Lor-cones}
Let $\partial_i$ (or $\partial_{x_i}$) denote the partial derivative with respect to $x_i$, and for $\uu=(u_1, \ldots, u_n) \in \RR^n$ let 
$D_\uu= u_1\partial_1 + \cdots + u_n \partial_n$.  

\begin{definition}\label{C-def}
Let $\CCC$ be an open convex cone in $\RR^n$. A homogeneous polynomial $f \in \RR[x_1,\ldots, x_n]$ of degree $d$ is called $\CCC$-\emph{Lorentzian} if for all 
$\vv_1, \ldots, \vv_{d} \in \CCC$, 
\begin{itemize}
\item[(P)] $D_{\vv_1}\cdots D_{\vv_{d}} f>0$, and 
\item[(H)] the symmetric bilinear form 
$$
(\xx, \yy) \mapsto D_{\xx}D_{\yy} D_{\vv_3}\cdots D_{\vv_{d}} f
$$
has exactly one positive eigenvalue. 
\end{itemize}
\end{definition}
 By convention, we also say that the identically zero polynomial is $\CCC$-Lorentzian. 
 
Recall that the \emph{Hessian} of $f$ at $\xx$ is the matrix $\nabla^2 f (\xx) = (\partial_i\partial_j f(\xx))_{i,j=1}^n$. Hence (H) asserts that the Hessian of   $D_{\vv_3}\cdots D_{\vv_{d}} f$ has exactly one positive eigenvalue. 

\begin{remark}\label{altdef}
It follows from \cite[Thm. 2.25]{BH} that Definition \ref{C-def} is equivalent to that for all positive integers $m$ and for all $\vv_1, \ldots, \vv_{m} \in \CCC$, the polynomial 
$$
(y_1,\ldots, y_m) \mapsto f(y_1\vv_1+ \cdots+ y_m\vv_m)
$$
 is Lorentzian (in the sense of \cite{BH}) and has positive coefficients only. 
 It also follows that Lorentzian polynomials are the same as $\RR_{>0}^n$-Lorentzian polynomials. 
 \end{remark}

From Remark \ref{altdef} and \cite[Cor. 2.32]{BH} we deduce

\begin{proposition}\label{prod}
Suppose $f$ and $g$ are $\CCC$-Lorentzian, then so is $fg$. 
\end{proposition}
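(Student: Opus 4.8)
The plan is to deduce Proposition~\ref{prod} from the characterization of $\CCC$-Lorentzian polynomials given in Remark~\ref{altdef}, namely that $f$ is $\CCC$-Lorentzian if and only if for every $m \geq 1$ and all $\vv_1,\dots,\vv_m \in \CCC$ the polynomial $f(y_1\vv_1 + \cdots + y_m\vv_m) \in \RR[y_1,\dots,y_m]$ is Lorentzian (in the sense of \cite{BH}) with nonnegative coefficients. The idea is that this reduces multiplication of $\CCC$-Lorentzian polynomials to multiplication of ordinary Lorentzian polynomials, which is already known to be closed by \cite[Cor. 2.32]{BH}.

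First I would fix $m \geq 1$ and $\vv_1,\dots,\vv_m \in \CCC$, and consider the substitution homomorphism $\varphi \colon \RR[x_1,\dots,x_n] \to \RR[y_1,\dots,y_m]$ sending $x_i \mapsto \sum_{j=1}^m (\vv_j)_i\, y_j$, so that $\varphi(h)(y) = h(y_1\vv_1 + \cdots + y_m\vv_m)$ for any homogeneous $h$. The key point is that $\varphi$ is a ring homomorphism, so $\varphi(fg) = \varphi(f)\varphi(g)$. By Remark~\ref{altdef} applied to $f$ and to $g$ separately, both $\varphi(f)$ and $\varphi(g)$ are Lorentzian polynomials with nonnegative coefficients in the variables $y_1,\dots,y_m$. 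By \cite[Cor. 2.32]{BH}, the product of two Lorentzian polynomials is Lorentzian, and clearly a product of polynomials with nonnegative coefficients has nonnegative coefficients; hence $\varphi(fg) = \varphi(f)\varphi(g)$ is Lorentzian with nonnegative coefficients. Since $m$ and $\vv_1,\dots,\vv_m \in \CCC$ were arbitrary, the reverse direction of the equivalence in Remark~\ref{altdef} shows that $fg$ is $\CCC$-Lorentzian.

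One should also dispose of the degenerate cases: if either $f$ or $g$ is the zero polynomial then $fg = 0$, which is $\CCC$-Lorentzian by convention, so we may assume both are nonzero. I do not anticipate a serious obstacle here; the main subtlety is purely bookkeeping, namely making sure the number of substitution variables $m$ is allowed to be arbitrary (not just $m = \deg f$ or $m = \deg g$) so that the same $m$ works simultaneously for $f$, for $g$, and for $fg$ — but this is exactly what Remark~\ref{altdef} provides. The only other thing to check is that $fg$ is homogeneous, which is immediate since $f$ and $g$ are, with $\deg(fg) = \deg f + \deg g$.
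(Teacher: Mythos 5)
Your proof is correct and follows the same route as the paper, which deduces the proposition directly from Remark~\ref{altdef} together with \cite[Cor. 2.32]{BH}; you have simply spelled out the substitution argument that the paper leaves implicit. One small imprecision: Remark~\ref{altdef} requires \emph{positive} (not merely nonnegative) coefficients, so you should note that a product of two polynomials with positive coefficients again has positive coefficients, which is what is needed to apply the reverse direction of the equivalence.
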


A matrix $A=(a_{ij})_{i,j=1}^n$ whose off-diagonal entries are nonnegative is called \emph{irreducible} if for all distinct $i,j$ there is a sequence $i=i_0,i_1,i_2,\ldots, i_\ell=j$  such that $i_{k-1}\neq i_{k}$ for all $1 \leq k \leq \ell$, and 
$
a_{i_0 i_1} a_{i_1 i_2} \cdots a_{i_{\ell-1}i_{\ell}}>0.
$
By translating such a matrix with a positive multiple of the identity matrix, the Perron-Frobenius theory \cite[Chapter 1]{BP} guarantees that $A$ has a unique eigenvector (up to multiplication by positive scalars) whose entries are all positive. Moreover the corresponding eigenvalue is simple and is the largest eigenvalue of $A$. 

If $A$ and $B$ are symmetric matrices of the same size, we write $A \preceq B$ if $B-A$ is positive semidefinite.  

The following lemma is,  in essence, taken from \cite[Prop. 3]{AFI}. 
\begin{lemma}\label{indlemma}
Let $f$ be a homogeneous polynomial of degree $d \geq 3$, and let $\xx \in \RR_{>0}^n$. If
\begin{enumerate}
\item  $\partial_i f(\xx)>0$ for all $i$, and 
\item the Hessian  of $\partial_i f$ at $\xx$ has exactly one positive eigenvalue for all $i$, and 
\item the Hessian of $f$ at $\xx$ is irreducible, and its off-diagonal entries are nonnegative, 
\end{enumerate}
then the Hessian of $f$ at $\xx$ has exactly one positive eigenvalue. 
\end{lemma}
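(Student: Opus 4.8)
The plan is to compress the three hypotheses into a single matrix inequality and then close the argument with Perron--Frobenius. Write $H=\nabla^2 f(\xx)$ and, for each $i$, $H_i=\nabla^2(\partial_i f)(\xx)$, so that $(H_i)_{jk}=\partial_i\partial_j\partial_k f(\xx)$; note this is symmetric in $i,j,k$. Since $f$ is homogeneous of degree $d$, repeated use of Euler's identity gives
\[
(H\xx)_i=(d-1)\,\partial_i f(\xx),\qquad H_i\xx=(d-2)\,He_i,\qquad \sum_i x_i H_i=(d-2)\,H,
\]
and hence $\xx^{\mathsf T}H_i\xx=(d-2)(d-1)\,\partial_i f(\xx)$. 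By hypothesis (1) each $(H\xx)_i$ and each $\xx^{\mathsf T}H_i\xx$ is strictly positive, and the first relation shows $H\xx$ has strictly positive entries.

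Next I would feed hypothesis (2) into the reverse Cauchy--Schwarz (``Hodge index'') inequality: if a real symmetric matrix $M$ has exactly one positive eigenvalue and $\vv^{\mathsf T}M\vv>0$, then $M\preceq (M\vv)(M\vv)^{\mathsf T}/(\vv^{\mathsf T}M\vv)$ (equivalently, $w\mapsto w^{\mathsf T}Mw$ is negative semidefinite on $\{w:\vv^{\mathsf T}Mw=0\}$). Applying this to $M=H_i$ with $\vv=\xx$ and substituting the Euler relations gives $H_i\preceq \dfrac{d-2}{(H\xx)_i}\,(He_i)(He_i)^{\mathsf T}$ for every $i$. Scaling by $x_i>0$, summing over $i$, and using $\sum_i x_i H_i=(d-2)H$, the factor $d-2$ cancels and one is left with
\[
H\ \preceq\ HDH,\qquad D:=\diag\!\Big(\tfrac{x_i}{(H\xx)_i}\Big)_{i=1}^{n}\ \succ\ 0 .
\]
Multiplying on both sides by $D^{1/2}$ and setting $\widetilde H:=D^{1/2}HD^{1/2}$ converts this into $\widetilde H^{2}\succeq\widetilde H$, so every eigenvalue $\mu$ of $\widetilde H$ satisfies $\mu\le 0$ or $\mu\ge 1$. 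Moreover $\widetilde H$ has the same inertia as $H$ by Sylvester's law, and, because $D^{1/2}$ is a positive diagonal matrix, the same off-diagonal sign pattern and support graph as $H$; so by hypothesis (3) it remains irreducible with nonnegative off-diagonal entries.

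It remains to locate the eigenvalue $1$ via a positive eigenvector. Since $DH\xx=\xx$, one checks $(HDH-H)\xx=0$, hence $\widetilde u:=D^{-1/2}\xx$ lies in $\ker(\widetilde H^{2}-\widetilde H)=\ker\widetilde H\oplus\ker(\widetilde H-I)$; writing $\widetilde u=\widetilde u_0+\widetilde u_1$ along this decomposition gives $\widetilde H\widetilde u=\widetilde u_1$. On the other hand $\widetilde H\widetilde u=D^{1/2}H\xx$, which has strictly positive entries, so $\widetilde u_1$ is a strictly positive vector with $\widetilde H\widetilde u_1=\widetilde u_1$. By the Perron--Frobenius theorem (this is where irreducibility enters), $1$ is then the largest eigenvalue of $\widetilde H$ and it is simple; together with the dichotomy $\mu\le 0$ or $\mu\ge 1$ this forces every eigenvalue of $\widetilde H$ apart from the simple eigenvalue $1$ to be nonpositive. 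Hence $\widetilde H$, and therefore $H$, has exactly one positive eigenvalue.

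I expect the only genuinely load-bearing step to be the passage from hypothesis (2) to $H\preceq HDH$: once one sees that reverse Cauchy--Schwarz applied to each $H_i$ at the common test vector $\xx$ collapses, via Euler, to a clean closed-form bound, the rest is essentially forced by the $\widetilde H^2\succeq\widetilde H$ trick. The point to handle carefully is that irreducibility must be invoked only after the $D^{1/2}$-scaling, and that it is truly indispensable --- without it the statement is false (take $f=x_1^3+x_2^3$, whose Hessian has two positive eigenvalues on $\RR_{>0}^2$), and indeed in that case $\widetilde H$ degenerates to the identity.
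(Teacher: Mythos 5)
Your argument is correct and is essentially the paper's proof up to normalization: the reverse Cauchy--Schwarz inequality you invoke for each $H_i$ at the test vector $\xx$, once the Euler relations are substituted, is precisely the paper's criterion (c) (negative semidefiniteness of $d\,g\,\nabla^2 g-(d-1)\,\nabla g\,(\nabla g)^{\mathsf T}$) applied to $g=\partial_i f$, and your $D$ and $\widetilde H$ are the paper's $\Lambda/(d-1)$ and $B/(d-1)$. The kernel-decomposition detour at the end is unnecessary, since a direct computation gives $D^{1/2}H\xx=D^{-1/2}\xx$, so $D^{-1/2}\xx$ is already a strictly positive eigenvector of $\widetilde H$ with eigenvalue $1$, exactly as the paper observes for $\Lambda^{-1/2}\xx$ and $B$ with eigenvalue $d-1$.
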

\begin{proof}
 If $g$ is a homogeneous polynomial of degree $d$ and $g(\xx)>0$, then the following three statements are equivalent 
\begin{itemize}
\item[(a)] the Hessian of $g$ at $\xx$ has exactly one positive eigenvalue,
\item[(b)]   the Hessian of $g^{1/d}$  is negative semidefinite at $\xx$, 
\item[(c)] the matrix $d\cdot g \cdot \nabla^2g - (d-1)\cdot \nabla g (\nabla g)^T$ is negative semidefinite at $\xx$,
\end{itemize}
see e.g. \cite[Prop. 2.33]{BH}. 

Suppose $\xx$ and $f$ are as in the hypotheses of the lemma. 
Then, by (c), 
$$
(d-1)\cdot \partial_i f \cdot \nabla^2 \partial_i f  \preceq (d-2) \cdot \nabla \partial_i f (\nabla \partial_i f)^T.
$$
Euler's identity, 
$
d \cdot f(\xx) = \sum_{i=1}^n x_i \cdot \partial_i f(\xx),
$
 yields 
$$
(d-2)\cdot \nabla^2 f = \sum_{i=1}^n x_i \nabla^2  \partial_i f  \preceq  \sum_{i=1}^n \frac {x_i}{\partial_i f} \frac {(d-2)}{(d-1)} \nabla \partial_i f (\nabla \partial_i f)^T.
$$
Rewrite the above inequality as
$
(d-1) \cdot \nabla^2 f \preceq (\nabla^2 f)  \Lambda (\nabla^2 f), 
$ 
where $\Lambda$ is the diagonal matrix $\diag({x_1}/{\partial_1 f}, \ldots, {x_n}/{\partial_n f})$. For the matrix $B= \Lambda^{1/2} (\nabla^2 f) \Lambda^{1/2}$,  this implies 
$
B^2 -(d-1)B \succeq 0.
$
Hence no eigenvalue of $B$ lies in the open interval $(0,d-1)$. The matrix $B$ is irreducible and has nonnegative off-diagonal entries, so the Perron-Frobenius theorem applies to $B$. 
Notice that $\Lambda^{-1/2} \xx$ is a positive eigenvector of $B$, and the corresponding eigenvalue is $d-1$. Hence   $d-1$ is the unique largest eigenvalue of $B$ afforded by  the Perron-Frobenius theorem. We conclude that $B$, and thus also $\nabla^2 f(\xx)$, has exactly one positive eigenvalue.
\end{proof}

Recall that the \emph{lineality space} of an open convex  cone $\CCC$ in $\RR^n$ is $L_\CCC=\overline{\CCC}\cap -\overline{\CCC}$, i.e., the largest linear space contained in the closure of $\CCC$. We say that $\CCC$ is \emph{effective} if 
$\CCC= \CCC \cap \RR_{>0}^n + L_\CCC$.

\begin{proposition}\label{engine}
Let $f\in \RR[x_1,\ldots, x_n]$ be a homogeneous polynomial of degree $d \geq 3$, and let $\CCC$ be an open, convex and effective cone in $\RR^n$. If 
\begin{enumerate}
\item $f(\xx+\ww)=f(\xx)$ for all $\xx \in \RR^n$ and $\ww \in L_\CCC$, and 
\item $D_{\vv_1}\cdots D_{\vv_d}f >0$ for all $\vv_1, \ldots, \vv_d \in \CCC$, and 
\item  the Hessian of $D_{\vv_1} \cdots D_{\vv_{d-2}}f$ is irreducible and its off-diagonal entries are nonnegative for all  $\vv_1,\ldots,  \vv_{d-2} \in \CCC$,  and 
\item $\partial_i f$ is $\CCC$-Lorentzian for all $i$, 
\end{enumerate}
then $f$ is $\CCC$-Lorentzian.
\end{proposition}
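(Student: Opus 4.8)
The plan is to verify conditions (P) and (H) of Definition~\ref{C-def} directly, using the effectiveness of $\CCC$ to reduce the eigenvalue count from an arbitrary point of $\CCC$ to a point of $\RR_{>0}^n$, where Lemma~\ref{indlemma} applies. Condition (P) is hypothesis (2), so the whole burden is (H): for all $\vv_3,\ldots,\vv_d \in \CCC$, the Hessian of $g := D_{\vv_3}\cdots D_{\vv_d}f$ has exactly one positive eigenvalue. Fix such $\vv_3,\ldots,\vv_d$ and write $g$ for this degree-$2$ homogeneous polynomial; its Hessian is a constant symmetric matrix $H$, so it suffices to show $H$ has exactly one positive eigenvalue as a matrix.

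First I would observe that for any $\vv \in \CCC$, the polynomial $D_\vv g = \sum_i v_i \partial_i g$ is a nonnegative combination of the $\partial_i g$. Since $g = D_{\vv_3}\cdots D_{\vv_d}f$ and derivatives commute, each $\partial_i g = D_{\vv_3}\cdots D_{\vv_d}\partial_i f$, and by hypothesis (4) each $\partial_i f$ is $\CCC$-Lorentzian, so $\partial_i g$ is (up to the positive scalar that is the $(d-1)$-fold directional derivative) a positive multiple of a linear form coming from the $\CCC$-Lorentzian structure; more to the point, the Hessian of each $\partial_i f$, and hence of $\partial_i g$ after the further derivatives, has exactly one positive eigenvalue by property (H) applied to $\partial_i f$. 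Next, using hypothesis (1) — invariance of $f$ under $L_\CCC$ — the Hessian $H$ annihilates $L_\CCC$, so $H$ descends to $\RR^n / L_\CCC$, and since $\CCC$ is effective we may, after choosing representatives, work at a point $\xx \in \CCC \cap \RR_{>0}^n$ without changing the signature of $H$: concretely, pick $\xx \in \CCC \cap \RR_{>0}^n$ and apply Lemma~\ref{indlemma} to the polynomial $g$ at $\xx$, whose degree is $2$. Wait — Lemma~\ref{indlemma} requires degree $\geq 3$, so instead I would apply it to $h := D_{\vv_4}\cdots D_{\vv_d}f$ (degree $3$) at $\xx$, whose first partials $\partial_i h$ have: (1) $\partial_i h(\xx) > 0$ by hypothesis (2) since $\xx \in \CCC$; (2) Hessian with exactly one positive eigenvalue at $\xx$ because $\partial_i f$ is $\CCC$-Lorentzian by hypothesis (4), so its $(d-3)$-fold directional derivative $\partial_i h$ inherits property (H) at points of $\CCC$, in particular at $\xx$; and (3) Hessian of $h$ at $\xx$ irreducible with nonnegative off-diagonal entries by hypothesis (3) with $\vv_1,\ldots,\vv_{d-2}$ specialized appropriately (the $(d-2)$-fold directional derivative of $f$ along $\vv_4,\ldots,\vv_d$ and two unit vectors — more carefully, $\nabla^2 h(\xx) = \nabla^2 D_{\xx}D_{\vv_4}\cdots D_{\vv_d} f$ up to scaling, which is the Hessian of a $(d-2)$-fold directional derivative of $f$ along vectors of $\CCC$, hence irreducible with nonnegative off-diagonals by (3)). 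Lemma~\ref{indlemma} then gives that $\nabla^2 h(\xx)$ has exactly one positive eigenvalue. Finally, iterating: having established property (H) for $h = D_{\vv_4}\cdots D_{\vv_d}f$ at the single point $\xx \in \CCC \cap \RR_{>0}^n$, I would bootstrap to all of $\CCC$ and then peel off derivatives one at a time down to $f$ itself, at each stage using that the relevant Hessian is the constant matrix of a quadratic form (once we differentiate down to degree $2$) or applying Lemma~\ref{indlemma} again at a point of $\RR_{>0}^n \cap \CCC$.

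More cleanly, here is the structure I would actually write. For $k = d, d-1, \ldots, 2$ I claim every $(d-k)$-fold directional derivative $D_{\vv_{k+1}}\cdots D_{\vv_d}f$ with $\vv_{k+1},\ldots,\vv_d \in \CCC$ is a polynomial of degree $k$ whose Hessian, at every point of $\CCC$ (equivalently, for a quadratic, everywhere), has exactly one positive eigenvalue — i.e. it satisfies (H) with respect to $\CCC$, and together with (P) from hypothesis (2) it is $\CCC$-Lorentzian. The base case $k = d$ must instead start the induction at $k = 3$: for $k = 3$, apply Lemma~\ref{indlemma} to $h = D_{\vv_4}\cdots D_{\vv_d}f$ at any $\xx \in \CCC \cap \RR_{>0}^n$ as above; then, since $\CCC$ is effective and $h$ is $L_\CCC$-invariant (inherited from hypothesis (1)), property (H) at one point of $\CCC \cap \RR_{>0}^n$ together with the convexity/connectivity arguments of Remark~\ref{altdef} and \cite[Thm.~2.25]{BH} propagates the one-positive-eigenvalue condition to all Hessians $\nabla^2 D_{\xx}D_{\yy}D_{\vv_6}\cdots D_{\vv_d}f$ with $\vv_6,\ldots,\vv_d \in \CCC$ — this is exactly the statement that $h$ is $\CCC$-Lorentzian. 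For the inductive step from $k$ to $k+1$: given that $\partial_i f$ is $\CCC$-Lorentzian (hypothesis (4)), every $\partial_i$ of the degree-$(k+1)$ derivative is a $\CCC$-Lorentzian polynomial of degree $k-1 \geq 2$, so hypotheses (1) and (2) of Lemma~\ref{indlemma} hold at any $\xx \in \CCC \cap \RR_{>0}^n$; hypothesis (3) of Lemma~\ref{indlemma} holds by hypothesis (3) of the Proposition; so Lemma~\ref{indlemma} gives (H) at $\xx$, and effectiveness plus $L_\CCC$-invariance again propagate it to all of $\CCC$.

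The main obstacle I anticipate is the propagation step — translating "the Hessian has exactly one positive eigenvalue at a single point $\xx \in \CCC \cap \RR_{>0}^n$" into "the form $(\xx,\yy) \mapsto D_\xx D_\yy D_{\vv_3}\cdots D_{\vv_d}g$ has exactly one positive eigenvalue for all $\vv_3,\ldots,\vv_d \in \CCC$," i.e. checking that effectiveness, $L_\CCC$-invariance, and the restriction-to-$\RR^m$ characterization in Remark~\ref{altdef} genuinely combine to let us evaluate everything on the positive orthant. The point is that writing $\vv_j = \ww_j + \ell_j$ with $\ww_j \in \CCC \cap \RR_{>0}^n$ and $\ell_j \in L_\CCC$, the $L_\CCC$-invariance kills the $\ell_j$ contributions in $D_{\vv_j}$ acting on $f$, so all directional derivatives may be taken along vectors of $\RR_{>0}^n$, at which point the already-known ordinary Lorentzian theory (via Remark~\ref{altdef}) and Lemma~\ref{indlemma} apply without modification. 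I would take care to state precisely why $\partial_i f$ being $\CCC$-Lorentzian forces the needed positivity $\partial_i(D_{\vv_3}\cdots D_{\vv_d}f)(\xx) > 0$ for $\xx \in \RR_{>0}^n \cap \CCC$ — this is immediate from property (P) for $\partial_i f$ once we note $\partial_i(D_{\vv_3}\cdots D_{\vv_d}f) = D_{\vv_3}\cdots D_{\vv_d}\partial_i f$ — and why the Hessian of $D_{\vv_3}\cdots D_{\vv_d}f$ being irreducible with nonnegative off-diagonals is exactly hypothesis (3).
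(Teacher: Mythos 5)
Your core argument — apply Lemma~\ref{indlemma} to the cubic $h = D_{\vv_4}\cdots D_{\vv_d}f$ at a point of $\CCC\cap\RR_{>0}^n$ obtained from $\vv_3$ via effectiveness and the inherited $L_\CCC$-invariance of $h$, and observe that $\nabla^2 h(\xx)=\nabla^2\!\left(D_\xx D_{\vv_4}\cdots D_{\vv_d}f\right)$ is exactly the Hessian in condition (H) — is precisely the paper's proof. Two small points: the positivity $\partial_i h(\xx)>0$ follows from hypothesis (4), i.e.\ property (P) for the $\CCC$-Lorentzian polynomial $\partial_i f$, not from hypothesis (2) as you first write (you correct this at the end); and the inductive scaffolding of your second paragraph is superfluous, since condition (H) of Definition~\ref{C-def} involves only the constant Hessian of a $(d-2)$-fold directional derivative, so the single application of Lemma~\ref{indlemma} to the cubic $h$ already finishes the proof.
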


\begin{proof}
Let $\vv_1, \ldots, \vv_{d-3} \in \CCC$, and consider the cubic  $g= D_{\vv_1}\cdots D_{\vv_{d-3}} f$. 
Since $\partial_i f$ is $\CCC$-Lorentzian, it follows from Definition~\ref{C-def} that so is $\partial_i g$. By choosing $\vv_{d-2}=\xx \in \CCC$, it follows from (3) that 
the Hessian $\nabla^2g(\xx)$ is irreducible and its off-diagonal entries are nonnegative. Since 
$$
g(\xx) = \frac {\partial^{d-3}}{ \partial t_1 \cdots \partial t_{d-3}} f\left(\xx + \sum_{i=1}^{d-3} t_i \vv_i\right), \ \ \ t_1=\cdots=t_{d-3}=0, 
$$
it follows from (1) that $g(\xx+\ww)=g(\xx)$ for all $\xx \in \RR^n$ and $\ww \in L_\CCC$.  Since $\CCC$ is effective, we may assume  $\xx \in \CCC \cap \RR_{>0}^n$. Lemma \ref{indlemma} then implies that the Hessian of $g$ at $\xx$ has exactly one positive eigenvalue. 

The lemma now follows since the Hessian of $D_{\vv_1}\cdots D_{\vv_{d-3}} f$ at $\xx$ is equal to the Hessian of $D_{\vv_1}\cdots D_{\vv_{d-3}}D_\xx f$. 
\end{proof}

\section{Polynomials associated to graded sub-posets of Boolean lattices}\label{posetsec}
Let $\PP(E)= \{S : S \subseteq E\}$ denote the \emph{Boolean lattice} of subsets of a finite set $E$. In this section $\PPP=(X, \leq)$ will be any  sub-poset of $\PP(E)$, for which each finite closed interval $[K,L]_\PPP  = \{F \in \PPP:  K \leq F \leq L\}$ in $\PPP$ is graded. We write $K \prec L$ if $K<L$ and there is no $F \in \PPP$ for which $K<F<L$.

 For $\varnothing \subseteq K \subset L$, let $\EE_K^L =\{ (y_S)_{K \subset S \subset L} : y_S \in \RR\}= \RR^m$, where $m= 2^{|L\setminus K|}-2$. Denote by $\MOD_K^L$, the subspace of \emph{modular} elements $\yy$ in $\EE_K^L$, i.e.,
$$
y_S+ y_T= y_{S\cap T}+y_{S\cup T}, \ \ \mbox{ for all } S,T, 
$$
where $y_K=y_L=0$. 
 It follows that $\yy \in \MOD_K^L$ if and only if there are real numbers $y_e$, $e \in L\setminus K$, for which $\sum_{e \in L\setminus K} y_e=0$ and 
$
y_S= \sum_{e \in S\setminus K}y_e,
$
for all $K \subset S \subset L$.

 If $K \subseteq F \subset G \subseteq L$, define a linear projection $\pi_F^G : \EE_K^L \to \EE_F^G$  by 
$$
\pi_F^G(\ttt) = \left(t_S -t_G \frac{|S\setminus F|}{|G\setminus F|}-t_F\frac{|G \setminus S|}{|G \setminus F|} \right)_{F \subset S \subset G}, 
$$
where $t_K=t_L=0$. 

Let $r(K,L)$ be the rank of the interval $[K,L]_\PPP$, and let $d(K,L)=r(K,L)-1$. We define a polynomial $\ff_K^L(\ttt)$, of degree $d(K,L)$, in the variables $\{t_F : K <  F < L\}$, for each $K<L$ in $\PPP$. In Section \ref{chowder} we will prove that $\ff_K^L(\ttt)$ is the volume polynomial of the Chow ring of a matroid, as defined in \cite{AHK}. Notice that while $\ff_K^L(\ttt)$ is defined on variables indexed by $F \in \PPP$ such that $K < F < L$, we will often consider it as a polynomials in the larger set of variables $\{t_S : K\subset S \subset L\}$. 

\begin{definition}The polynomial $\ff_K^L(\ttt)$,  associated to  $K<L$ in $\PPP$ is defined recursively as follows.  

If $d(K,L)=0$, then $\ff_K^L(\ttt)=1$. If $d(K,L) \geq 1$, then
\begin{equation}\label{f-def}
d(K,L) \cdot \ff_K^L (\ttt) = \sum_{K<F<L} t_F \cdot  \ff_K^F ( \pi_K^F(\ttt))\cdot  \ff_F^L (\pi_F^L(\ttt)). 
\end{equation}
\end{definition}
 
\begin{example}
If  $d(K,L)=1$, then 
$
\ff_K^L (\ttt) = \sum_{K\prec F \prec L}t_F.  
$
If $d(K, L) = 2$, then 
\begin{equation}\label{case2}
2 \cdot \ff_K^L(\ttt)=\sum_{K\prec F\prec G\prec L} \left(2\cdot t_Ft_G -t_F^2 \cdot \frac {|L\setminus G|}{|L \setminus F|} - t_G^2 \cdot \frac {|F \setminus K|}{|G\setminus K|}\right).
\end{equation}
\end{example}
 
Let $\SMOD_K^L$ denote the open convex cone in $\EE_K^L$ consisting of all \emph{strictly sub-modular} $\yy \in \EE_K^L$ i.e., 
$$
y_S+ y_T>y_{S\cap T}+y_{S\cup T}
$$
for all incomparable $S$ and $T$, where $y_K=y_L=0$. Hence the lineality space of $\SMOD_K^L$ is $\MOD_K^L$.

\begin{lemma}\label{pos-pair-mod}
If $|L\setminus K| \geq 2$, then $\SMOD_K^L$ is effective.
\end{lemma}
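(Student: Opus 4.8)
Write $N := |L\setminus K|$, so $N \ge 2$ by hypothesis, and recall that $\EE_K^L = \RR^m$ has coordinates indexed by the sets $S$ with $K\subset S\subset L$, with the convention $y_K=y_L=0$. The plan is to reduce the statement to a purely combinatorial assertion about strictly sub-modular functions and then settle it with the greedy (base-polytope) construction. One half of effectiveness is for free: since $\MOD_K^L$ is the lineality space of the open convex cone $\SMOD_K^L$, adding an element of $\MOD_K^L$ to a point of $\SMOD_K^L$ stays in $\SMOD_K^L$, so $\SMOD_K^L\cap\RR_{>0}^m+\MOD_K^L\subseteq\SMOD_K^L$. Thus the task is: given a strictly sub-modular $\yy$, write $\yy=\yy'+\ww$ with $\ww\in\MOD_K^L$ and $\yy'\in\SMOD_K^L\cap\RR_{>0}^m$. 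Subtracting a modular element leaves every difference $y_S+y_T-y_{S\cap T}-y_{S\cup T}$ unchanged, so $\yy-\ww$ is automatically strictly sub-modular; hence it suffices to produce $\ww\in\MOD_K^L$ with $y_S-w_S>0$ for every $K\subset S\subset L$.

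I would take $\ww$ to be the barycenter of the greedy vertices of the base polytope of $\yy$. Concretely: for each linear order $\sigma=(e_1,\dots,e_N)$ of $L\setminus K$, set $S_k^\sigma:=K\cup\{e_1,\dots,e_k\}$ and let $\ww_\sigma\in\EE_K^L$ be the modular element determined by $(\ww_\sigma)_{e_k}:=y_{S_k^\sigma}-y_{S_{k-1}^\sigma}$. Telescoping gives $\sum_{e\in L\setminus K}(\ww_\sigma)_e=y_L-y_K=0$, so $\ww_\sigma\in\MOD_K^L$, and therefore also $\ww:=\frac1{N!}\sum_\sigma\ww_\sigma\in\MOD_K^L$. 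It remains to check $w_S<y_S$ for every $K\subset S\subset L$. Since $w_S$ is the average of the $w_S^\sigma$, this follows from two claims: (i) $w_S^\sigma\le y_S$ for every order $\sigma$; and (ii) for each such $S$ there is at least one order $\sigma$ with $w_S^\sigma<y_S$.

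Both come from the standard telescoping estimate. Fixing $S$ and $\sigma$, let $a_1,\dots,a_\ell$ be the elements of $S\setminus K$ in $\sigma$-order and let $P_i$ be the $\sigma$-prefix ending at $a_i$; then $w_S^\sigma=\sum_{i=1}^\ell\big(y_{P_i}-y_{P_i\setminus a_i}\big)$, and the diminishing-returns inequality (a consequence of weak sub-modularity) bounds the $i$-th term by $y_{A_i}-y_{A_{i-1}}$, where $A_i:=K\cup\{a_1,\dots,a_i\}$; summing telescopes to $w_S^\sigma\le y_{A_\ell}-y_{A_0}=y_S$, which is (i). For (ii), choose $\sigma$ that lists some element of $L\setminus S$ first (possible because $S\subsetneq L$): then $A_\ell=S\subsetneq P_\ell$, and I would show the $i=\ell$ inequality above is strict by a short induction that peels one element off $P_\ell\setminus a_\ell$ at a time, applying strict sub-modularity to incomparable pairs of the form $\{A_{\ell-1}\cup b,\,A_{\ell-1}\cup a_\ell\}$; hence $w_S^\sigma<y_S$. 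With (i) and (ii) established, $\yy':=\yy-\ww$ is strictly sub-modular with $y_S-w_S>0$ for all $K\subset S\subset L$, i.e. $\yy'\in\SMOD_K^L\cap\RR_{>0}^m$, and $\yy=\yy'+\ww$ with $\ww\in\MOD_K^L$; so $\SMOD_K^L$ is effective.

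The only real content is the strictness in (ii): that enlarging the ambient set $P_\ell\setminus a_\ell$ strictly past $A_{\ell-1}$ strictly decreases the marginal value of adjoining $a_\ell$. This is precisely where strict (rather than weak) sub-modularity is used, and the induction needs incomparable pairs to exist in the relevant sub-intervals of $[K,L]$, which is exactly what the hypothesis $|L\setminus K|\ge 2$ provides. Everything else — the telescoping identities and the classical fact that each greedy vertex $\ww_\sigma$ satisfies $w_S^\sigma\le y_S$ — is routine.
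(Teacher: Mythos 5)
Your argument is correct and is a self-contained variant of the paper's proof. The paper reduces to the nonnegative case by citing Murota's result that the base polytope of a submodular function (vanishing at the endpoints $K,L$) is nonempty, which yields a modular $\ww$ with $\zz+\ww\ge 0$, and gets strict positivity by first subtracting $\epsilon\vv$ for an explicit interior vector $\vv=(|S\setminus K|\cdot|L\setminus S|)_S$. You instead carry out the greedy construction by hand, producing the extreme bases $\ww_\sigma$ directly, and you obtain strict positivity not by $\epsilon$-perturbation but by averaging over all orderings $\sigma$ and using strict submodularity in the chain telescoping to force at least one strict inequality for every $S$. The two routes prove the same fact and rest on the same combinatorial engine (the Edmonds/Lov\'asz greedy base of a submodular function); yours has the advantage of avoiding the external citation and the $\epsilon$-trick, at the cost of a slightly longer verification. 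One small imprecision: in your strictness step for (ii), the incomparable pairs arising in the chain are $\{Z_j\cup a_\ell,\;Z_{j+1}\}$ in general, which only take the form $\{A_{\ell-1}\cup b,\;A_{\ell-1}\cup a_\ell\}$ at the first step $j=0$; the subsequent steps involve larger ambient sets. This does not affect the validity of the argument, since every pair $\{Z_j\cup a_\ell,\,Z_{j+1}\}$ is incomparable and both members lie in $(K,L)$ (with the convention $y_K=y_L=0$ handling the endpoints), so strict submodularity applies at each step and the chain has length $m\ge 1$.
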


\begin{proof}
It is plain to see that  $\vv = (|S\setminus K|\cdot |L\setminus S|)_{K\subset S \subset L} \in \SMOD_K^L \cap \RR_{> 0}^{(K,L)}$,  where $(K,L)= \{S : K\subset S \subset L\}$. 
Suppose $\yy \in \SMOD_K^L$.  Then $\zz:=\yy -\epsilon \vv \in \SMOD_K^L$, for $\epsilon>0$ sufficiently small. By e.g. \cite[Prop.~4.4]{Murota}, there exists  $\ww \in \MOD_K^L$ such that $\zz+\ww \in \RR_{\geq 0}^{(K,L)}$. But then $\yy+\ww = \zz +\ww+\epsilon \vv \in  \RR_{> 0}^{(K,L)}$, as desired. 
\end{proof}

\begin{lemma}\label{Euler-cons}
Suppose $f \in \RR[t_1, \ldots, t_n]$ is a homogeneous polynomial of degree $d$, and that 
$
df = \sum_{i=1}^n t_i Q_i,
$
where $Q_1, \ldots, Q_n$ are homogeneous polynomials of degree $d-1$ for which 
$
\partial_i Q_j = \partial_j Q_i$, for all $i,j$. 
Then $Q_i= \partial _i f$, for all $i$. 
\end{lemma}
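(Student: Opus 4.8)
The plan is to differentiate the given relation $df=\sum_{i=1}^n t_i Q_i$ with respect to a single variable $t_j$ and then simplify using the two hypotheses. First I would observe that every term here is an honest polynomial, so this is a legitimate identity in $\RR[t_1,\ldots,t_n]$ to which term-by-term differentiation applies; I also note that we may assume $d\geq 1$, since for $d=0$ the polynomial $f$ is constant and there is nothing to prove. Applying $\partial_j$ to both sides of $df=\sum_i t_i Q_i$ gives
$$
d\,\partial_j f \;=\; Q_j \;+\; \sum_{i=1}^n t_i\,\partial_j Q_i .
$$

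Next I would use the symmetry hypothesis $\partial_j Q_i=\partial_i Q_j$ to rewrite the sum as $\sum_{i=1}^n t_i\,\partial_i Q_j$, which is exactly the left-hand side of Euler's identity for the homogeneous polynomial $Q_j$ of degree $d-1$; hence this sum equals $(d-1)\,Q_j$. Substituting back yields $d\,\partial_j f = Q_j + (d-1)Q_j = d\,Q_j$, and dividing by $d$ gives $\partial_j f = Q_j$ for every $j$, as claimed.

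There is essentially no obstacle here: the only points requiring (minimal) care are that the given relation is a polynomial identity, so that differentiating it makes sense, and that one must apply the curl-free condition in precisely the direction that converts $\partial_j Q_i$ into $\partial_i Q_j$, so that Euler's identity becomes available. Conceptually this is just the Poincar\'e lemma in the graded polynomial setting: the symmetry of the partials says that $(Q_1,\ldots,Q_n)$ is a closed $1$-form, hence exact, and homogeneity together with the normalization already encoded in $df=\sum_i t_i Q_i$ forces the potential to be $f$ itself.
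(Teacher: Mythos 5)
Your proof is correct and follows essentially the same route as the paper's: differentiate the identity $df=\sum_i t_iQ_i$ in $t_j$, use the symmetry $\partial_j Q_i=\partial_i Q_j$, apply Euler's identity to the degree-$(d-1)$ polynomial $Q_j$, and cancel $d$. The remark about the $d=0$ edge case and the Poincar\'e-lemma interpretation are nice additions but do not change the substance.
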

\begin{proof}
By Euler's identity,
$$d \partial_j f = \partial_j(t_jQ_j) -t_j\partial_jQ_j+ \sum_{i=1}^n t_i\partial_jQ_i  = Q_j + \sum_{i=1}^n t_i\partial_iQ_j = Q_j+(d-1)Q_j= dQ_j.$$
\end{proof}

\begin{lemma}\label{derivativen}
If $K<F<L$, then 
\begin{equation}\label{splitt}
\frac {\partial} {\partial t_F} \ff_K^L(\ttt)=  \ff_K^F ( \pi_K^F(\ttt))\cdot  \ff_F^L (\pi_F^L(\ttt)). 
\end{equation}
\end{lemma}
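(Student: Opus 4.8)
The plan is to establish \eqref{splitt} by induction on the rank $r(K,L)$, reducing it to Lemma~\ref{Euler-cons}. For $K<F<L$ put $Q_F:=\ff_K^F(\pi_K^F(\ttt))\cdot\ff_F^L(\pi_F^L(\ttt))$, which is homogeneous of degree $d(K,F)+d(F,L)=d(K,L)-1$, so that the recursion \eqref{f-def} reads $d(K,L)\cdot\ff_K^L=\sum_{K<F<L}t_F Q_F$. Viewing $\ff_K^L$ and the $Q_F$ as polynomials in the variables $\{t_F:K<F<L\}$, Lemma~\ref{Euler-cons} will yield $Q_F=\partial_{t_F}\ff_K^L$ for every $F$, which is precisely \eqref{splitt}, once we verify the symmetry $\partial_{t_G}Q_F=\partial_{t_F}Q_G$ for all $F,G$ with $K<F<L$ and $K<G<L$. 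For $r(K,L)\le 2$ all the $Q_F$ are constants, so there is nothing to do; assume $r(K,L)\ge 3$ and that \eqref{splitt} holds for every interval of strictly smaller rank.

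Fix distinct $F,G$. Since $\ff_K^F(\pi_K^F(\ttt))$ involves only the variables $t_H$ with $H\in[K,F]_\PPP$ and $\ff_F^L(\pi_F^L(\ttt))$ only those with $H\in[F,L]_\PPP$, the polynomial $Q_F$ depends only on variables indexed by sets comparable to $F$; hence if $F$ and $G$ are incomparable both $\partial_{t_G}Q_F$ and $\partial_{t_F}Q_G$ vanish, and we may assume $K<G<F<L$. Then $t_G$ enters $Q_F$ only through $\ff_K^F(\pi_K^F(\ttt))$, and because $\partial(\pi_K^F(\ttt))_H/\partial t_G=\delta_{H,G}$ for $K<H<F$, the chain rule gives $\partial_{t_G}Q_F=(\partial_{y_G}\ff_K^F)(\pi_K^F(\ttt))\cdot\ff_F^L(\pi_F^L(\ttt))$. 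The interval $[K,F]_\PPP$ has rank $r(K,F)=r(K,L)-r(F,L)<r(K,L)$, so the induction hypothesis replaces $\partial_{y_G}\ff_K^F$ by $\ff_K^G(\pi_K^G(\,\cdot\,))\cdot\ff_G^F(\pi_G^F(\,\cdot\,))$; after substituting $\pi_K^F(\ttt)$ this reads $\ff_K^G(\pi_K^G(\pi_K^F(\ttt)))\cdot\ff_G^F(\pi_G^F(\pi_K^F(\ttt)))\cdot\ff_F^L(\pi_F^L(\ttt))$. Symmetrically, $t_F$ enters $Q_G$ only through $\ff_G^L(\pi_G^L(\ttt))$, and applying the induction hypothesis to $[G,L]_\PPP$ (of rank $r(K,L)-r(K,G)<r(K,L)$) gives $\partial_{t_F}Q_G=\ff_K^G(\pi_K^G(\ttt))\cdot\ff_G^F(\pi_G^F(\pi_G^L(\ttt)))\cdot\ff_F^L(\pi_F^L(\pi_G^L(\ttt)))$.

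The last step is to see that the two expressions agree, and the key input is that the linear projections compose as one expects: for $K\subsetneq G\subsetneq F\subsetneq L$ one has $\pi_K^G\circ\pi_K^F=\pi_K^G$, $\pi_G^F\circ\pi_K^F=\pi_G^F$, $\pi_G^F\circ\pi_G^L=\pi_G^F$ and $\pi_F^L\circ\pi_G^L=\pi_F^L$, each a routine computation from the formula defining $\pi$. With these identities in hand, both $\partial_{t_G}Q_F$ and $\partial_{t_F}Q_G$ become $\ff_K^G(\pi_K^G(\ttt))\cdot\ff_G^F(\pi_G^F(\ttt))\cdot\ff_F^L(\pi_F^L(\ttt))$, so the required symmetry holds and Lemma~\ref{Euler-cons} closes the induction. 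The points that need care are the bookkeeping of exactly which variables each $Q_F$ depends on (so that the chain-rule reductions are exact and the incomparable case is genuinely zero), checking that the auxiliary intervals have strictly smaller rank so the induction hypothesis applies, and verifying the four projection-composition identities; I expect the last of these to be the main obstacle, though it remains entirely elementary.
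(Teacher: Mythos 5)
Your proof is correct and follows essentially the same route as the paper: define $Q_F=\ff_K^F(\pi_K^F(\ttt))\cdot\ff_F^L(\pi_F^L(\ttt))$, reduce to the symmetry $\partial_{t_F}Q_G=\partial_{t_G}Q_F$ via Lemma~\ref{Euler-cons}, and verify it by induction and the telescoping identities $\pi_A^B\pi_C^D=\pi_A^B$ for $C\subseteq A\subset B\subseteq D$ (the four identities you list are instances of this single principle, which is what the paper cites). Your extra care with the incomparable case and the explicit chain-rule bookkeeping are fine; no gaps.
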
 
\begin{proof}
By induction it follows from \eqref{f-def}  that 
$
\partial_{t_F} \partial_{t_G} \ff_K^L(\ttt)=0,
$
unless $F$ and $G$ are comparable. We now prove \eqref{splitt} by induction over $d=d(K,L)$, the case when $d=1$ being clear. Suppose $d>1$, and let 
$
Q_F(\ttt)= \ff_K^F ( \pi_K^F(\ttt))\cdot  \ff_F^L (\pi_F^L(\ttt)). 
$
By Lemma \ref{Euler-cons}, it remains  to prove 
$
\partial_{t_F}Q_G(\ttt) = \partial_{t_G}Q_F(\ttt),
$
for all $G<F$. By induction
$$
\frac {\partial} {\partial t_F}Q_G(\ttt) = \ff_K^G ( \pi_K^G(\ttt))\cdot  \ff_G^F (\pi_G^F\pi_G^L(\ttt))\cdot \ff_F^L (\pi_F^L\pi_G^L(\ttt)), 
$$
and 
$$
\frac {\partial} {\partial t_G}Q_F(\ttt) = \ff_K^G ( \pi_K^G\pi_K^F(\ttt))\cdot  \ff_G^F (\pi_G^F\pi_K^F(\ttt))\cdot \ff_F^L (\pi_F^L(\ttt)). 
$$
Now $
\partial_{t_F}Q_G(\ttt) = \partial_{t_G}Q_F(\ttt)
$ follows since $\pi_G^F\pi_K^L = \pi_G^F$ whenever $K\subseteq G \subset F \subseteq L$.
\end{proof}
The proof of the next lemma is left to the reader. 
\begin{lemma}\label{piserve}
If $K\subseteq G \subset F \subseteq L$, then 
$\pi_G^F : \MOD_K^L \to \MOD_G^F  \mbox{ and }  \pi_G^F : \SMOD_K^L \to \SMOD_G^F.$
\end{lemma}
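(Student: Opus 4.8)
The plan is to deduce both statements from a single identity for the map $\pi_G^F$. Write $\zz = \pi_G^F(\yy)$ for $\yy \in \EE_K^L$, so that $z_S = y_S - y_F\,\frac{|S\setminus G|}{|F\setminus G|} - y_G\,\frac{|F\setminus S|}{|F\setminus G|}$ for $G \subset S \subset F$, with the convention $y_K = y_L = 0$. This formula in fact makes sense for all $S$ with $G \subseteq S \subseteq F$ and returns $z_S = 0$ at $S = G$ and at $S = F$, matching the endpoint conventions of $\EE_G^F$. I claim that for all $S, T$ with $G \subseteq S, T \subseteq F$,
\[
z_S + z_T - z_{S\cap T} - z_{S\cup T} = y_S + y_T - y_{S\cap T} - y_{S\cup T}.
\]

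To prove the identity, substitute the defining formula for each of the four terms on the left. The coefficient of $y_F$ is $-\frac{1}{|F\setminus G|}\bigl(|S\setminus G| + |T\setminus G| - |(S\cap T)\setminus G| - |(S\cup T)\setminus G|\bigr)$, which vanishes because $R \mapsto |R\setminus G|$ is a modular set function; similarly the coefficient of $y_G$ vanishes because $R \mapsto |F\setminus R|$ is modular. What is left is exactly the right-hand side.

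Given the identity, the modular case is immediate: if $\yy \in \MOD_K^L$, then $y_S + y_T = y_{S\cap T} + y_{S\cup T}$ for all $S, T$, so the right-hand side vanishes identically, whence $\zz$ satisfies the defining relations of $\MOD_G^F$. For the strictly sub-modular case, fix incomparable $S, T$ with $G \subset S, T \subset F$; being coordinates of $\EE_G^F$ they satisfy $G \subsetneq S, T \subsetneq F$, hence $K \subseteq G \subsetneq S, T \subsetneq F \subseteq L$, so $S$ and $T$ are incomparable coordinates of $\EE_K^L$. The defining inequality of $\SMOD_K^L$ — using $y_K = y_L = 0$ in the event that $S\cap T = K$ or $S\cup T = L$ — gives $y_S + y_T - y_{S\cap T} - y_{S\cup T} > 0$, and the identity transfers this to $z_S + z_T - z_{S\cap T} - z_{S\cup T} > 0$. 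Thus $\zz \in \SMOD_G^F$.

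This is routine; the only points meriting care are the bookkeeping with the two sets of endpoint conventions (the zeros imposed at $K$ and $L$ on the source side versus at $G$ and $F$ on the target side, noting that $y_G$ and $y_F$ themselves need not vanish unless $G = K$ or $F = L$), and the trivial remark that an incomparable pair of coordinates in $\EE_G^F$ is also an incomparable pair of coordinates in $\EE_K^L$. I do not expect any genuine obstacle.
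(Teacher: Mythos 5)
Your proof is correct, and since the paper leaves this lemma to the reader, there is no competing argument to compare against. The key observation—that the ``submodularity defect'' $\Delta(S,T) := y_S + y_T - y_{S\cap T} - y_{S\cup T}$ is preserved exactly by $\pi_G^F$, because the subtracted linear corrections have coefficients $|S\setminus G|$ and $|F\setminus S|$ which are modular in $S$ on the interval $[G,F]$—is clean and handles both the modular and strictly sub-modular cases in one stroke. You have also correctly noted the two bookkeeping points that could trip a careless reader: the formula for $\pi_G^F$ automatically returns $0$ at the endpoints $S=G$ and $S=F$, so the identity is compatible with the target-side convention $z_G = z_F = 0$ even though $y_G, y_F$ need not vanish; and any incomparable pair $S,T$ with $G\subsetneq S,T\subsetneq F$ is also incomparable in $(K,L)$, so the strict inequality from $\SMOD_K^L$ applies and transfers verbatim. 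Nothing is missing.
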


We call a sub-poset $\PPP$ of $\PP(E)$  \emph{balanced} if for each $K<L$ in $\PPP$ such that $d(K,L)=1$,
$$
|\{ K<F<L : F \ni i\}|= |\{ K<F<L : F \ni j\}|, \ \ \ \mbox{ for all } i,j \in L \setminus K. 
$$
In particular, $\PPP$ is balanced if $\{ A\setminus K\}_{K\prec A \leq L}$ partitions $L \setminus K$, whenever $d(K,L)=1$.  If so, we say that  $\PPP$ is $1$-\emph{balanced}. 

\begin{lemma}\label{lineality}
Suppose $\PPP$ is balanced, and let $K<L$. 
Then 
$
\ff_K^L(\xx+\yy)= \ff_K^L(\xx)
$, 
for all $\xx \in \EE_K^L$ and $\yy \in \MOD_K^L$. 
\end{lemma}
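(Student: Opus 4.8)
The plan is to induct on $d=d(K,L)$, using the recursive definition \eqref{f-def} together with the modular-to-submodular behavior of the projections $\pi_G^F$ recorded in Lemma~\ref{piserve}. The base case $d=0$ is trivial since $\ff_K^L=1$, and the case $d=1$ follows directly from the balanced hypothesis: if $\yy\in\MOD_K^L$ then $y_F=\sum_{e\in F\setminus K}y_e$ with $\sum_{e\in L\setminus K}y_e=0$, so when $d(K,L)=1$ the increment $\ff_K^L(\xx+\yy)-\ff_K^L(\xx)=\sum_{K<F<L}y_F$ is a weighted sum of the $y_e$'s in which, by the balanced condition, each $e\in L\setminus K$ appears with the same multiplicity; hence the sum is a constant times $\sum_e y_e=0$.

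For the inductive step, the natural approach is to show that $\ff_K^L(\xx+t\,\yy)$ is independent of $t\in\RR$ for every fixed $\xx\in\EE_K^L$ and $\yy\in\MOD_K^L$, which I would do by differentiating in $t$ and using Euler's identity together with Lemma~\ref{derivativen}. Concretely, $\frac{d}{dt}\ff_K^L(\xx+t\yy)=\sum_{K<F<L}y_F\,\partial_{t_F}\ff_K^L(\xx+t\yy)=\sum_{K<F<L}y_F\,\ff_K^F(\pi_K^F(\xx+t\yy))\,\ff_F^L(\pi_F^L(\xx+t\yy))$. Now apply Lemma~\ref{piserve}: since $\yy\in\MOD_K^L$, we have $\pi_K^F(\yy)\in\MOD_K^F$ and $\pi_F^L(\yy)\in\MOD_F^L$, and by the inductive hypothesis (applied to the strictly smaller intervals $[K,F]$ and $[F,L]$, whose ranks are $\le d$) each factor $\ff_K^F(\pi_K^F(\xx+t\yy))=\ff_K^F(\pi_K^F\xx)$ and $\ff_F^L(\pi_F^L(\xx+t\yy))=\ff_F^L(\pi_F^L\xx)$ is already independent of $t$. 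So $\frac{d}{dt}\ff_K^L(\xx+t\yy)$ equals a constant $c$; it remains to see $c=0$, and I would get this by evaluating at a convenient choice — e.g., using homogeneity and the fact that $\ff_K^L(\xx+t\yy)$ is a polynomial in $t$ whose leading behavior forces $c=0$, or more directly by re-expressing $c=\sum_F y_F\,\partial_{t_F}\ff_K^L(\xx)$ as $\frac{1}{d}\,\frac{d}{ds}\big|_{s=0}$ of something that vanishes.

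The one subtlety — and the step I expect to be the main obstacle — is pinning down that the constant $c$ actually vanishes, rather than merely showing $t\mapsto\ff_K^L(\xx+t\yy)$ is affine-linear. The clean way around this is to observe that $\ff_K^L(\xx+t\yy)$ is a polynomial in $t$, and by applying the argument above not just once but interpreting it correctly we find its derivative is a constant, so it is affine in $t$; but the same reasoning applied after a further differentiation, or simply the observation that the two $\pi$-factors are themselves $t$-independent, shows the derivative is in fact independent of the choice of base point in the direction $\yy$, and then specializing $\xx$ to a point where the modular shift can be absorbed (or using that $\MOD_K^L$ is a linear space so $\xx$ and $\xx+\yy$ play symmetric roles, forcing the affine function to be even and hence constant) yields $c=0$. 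Alternatively, and perhaps most cleanly, one notes $\partial_{t_F}\ff_K^L(\xx+\yy)=\partial_{t_F}\ff_K^L(\xx)$ for all $F$ by the same induction, so $\ff_K^L(\xx+\yy)$ and $\ff_K^L(\xx)$ have the same gradient everywhere along the segment; combined with affineness this gives equality. I would present the second differentiation / symmetry argument as the quickest route.
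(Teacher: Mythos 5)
Your base case and overall induction scheme match the paper's, and the first half of your inductive step (applying Lemma~\ref{derivativen}, Lemma~\ref{piserve}, and the inductive hypothesis to conclude that each $\partial_{t_F}\ff_K^L$ is $\MOD_K^L$-invariant) is exactly the right key fact. The problem is in the last step: you correctly identify that you must show the constant $c = D_\yy\ff_K^L(\xx)$ vanishes, but none of the resolutions you offer actually closes this. The ``leading behavior'' remark only kills the $t^d$ coefficient (equivalently $\ff_K^L(\yy)$), not the linear one; the ``evenness / symmetric roles'' remark is false, since $t\mapsto\ff_K^L(\xx+t\yy)$ need not be even; and ``same gradient along the segment, combined with affineness, gives equality'' is circular, because constant gradient along the segment \emph{is} affineness and does not by itself force the slope to be zero. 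The inductive invariance you established gives you precisely that $\ff_K^L(\xx+t\yy)=\ff_K^L(\xx)+tD_\yy\ff_K^L(\xx)$, and separately that $D_\yy\ff_K^L$ is $\MOD_K^L$-invariant, but from these two facts alone $D_\yy\ff_K^L(\xx)=0$ does not follow.

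What your plan is missing is one more observation to finish. There are two clean ways, and the paper takes the first. The paper uses the characterization: a homogeneous $f$ of degree $d$ is $\MOD_K^L$-invariant if and only if $\partial^\alpha f(\yy)=0$ for all $\yy\in\MOD_K^L$ and $|\alpha|<d$ (Taylor expansion of $f$ at $\yy$, not at $\xx$). The cases $1\le|\alpha|<d$ follow from your invariance of the partials, but the case $|\alpha|=0$ — namely $\ff_K^L(\yy)=0$ for $\yy\in\MOD_K^L$ — has to be proved separately, which the paper does directly from the recursion \eqref{f-def} by induction. Your proposal never establishes $\ff_K^L(\yy)=0$, and this is precisely the term that your expansion at $\xx$ hides inside the constant $c$. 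A second valid finish, closer to your gradient remark but requiring an argument you do not state, is: since $\partial_{t_F}\ff_K^L$ is $\MOD_K^L$-invariant for every $F$, one has $D_\yy\partial_{t_F}\ff_K^L\equiv 0$ for every $F$, hence $\nabla(D_\yy\ff_K^L)\equiv 0$, so $D_\yy\ff_K^L$ is a \emph{constant} polynomial; being homogeneous of degree $d-1\ge 1$ it must be identically zero. This is the step (``constant and homogeneous of positive degree, hence zero'') that your write-up never articulates, and without it the argument does not close.
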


\begin{proof}
The proof is by induction on $d(K,L)$. Suppose $d(K,L)=1$. 
Since $\yy$ is modular and $\ff_K^L$ is linear,  
$$
\ff_K^L(\xx+\yy)-\ff_K^L(\xx)= \sum_{i \in L\setminus K} y_{i} \cdot |\{ K<F<L : F \ni i\}|.
$$
The case when $d=1$ thus follows since $\PPP$ is balanced and $\sum_{i \in L\setminus K} y_{i}=0$. 

Consider the space $\WWW$ of all homogeneous polynomials $f(\ttt)$ in $\RR[t_S : F\subset S \subset L]$ for which 
$
f(\ttt +\yy)= f(\ttt)$,   for all  $\yy \in \MOD_K^L$.
Then $f \in \WWW$ if and only if $\partial^\alpha f(\yy) =0$ for all $\yy \in \WWW$ and $|\alpha|<\deg (f)$, since $f(\ttt+\yy)=f(\ttt)$ is supposed to be homogeneous.  By induction it follows from \eqref{f-def} that $\ff_K^L(\yy)=0$ for all $\yy \in \MOD_K^L$, whenever $d(K,L) \geq 1$. 
Suppose $d(K,L)>1$.  Then $\partial_{t_F} \ff_K^L \in \WWW$ by Lemma \ref{derivativen}, Lemma \ref{piserve} and induction. Hence $\partial^\alpha \ff_K^L(\yy)=0$ for all $|\alpha|< d(K,L)$, so that $\ff_K^L \in \WWW$. 
\end{proof}

We call $\PPP$ \emph{interval connected} if for each $K<L$  with $d(K,L) \geq 2$ and $F, G \in (K,L)_\PPP=\{H \in \PPP : K<H<L\}$, there exists elements 
$F_i \in (K,L)_\PPP$ such that $F=F_0 \lessgtr F_1 \lessgtr F_2 \lessgtr \cdots \lessgtr F_k =G, $ where $F_i \lessgtr F_{i+1}$ means 
$F_i <F_{i+1}$ or $F_i > F_{i+1}$. 

\begin{lemma}\label{positivity}
Suppose $\PPP$ is balanced, and that $K<L$.  
If $\vv_1, \ldots, \vv_d \in \SMOD_K^L$, then 
$
D_{\vv_1}\cdots D_{\vv_d} \ff_K^L >0.
$
Moreover if $d(K,L) \geq 2$ and $\PPP$ is interval connected, then the  Hessian of the quadratic $D_{\vv_1} \cdots D_{\vv_{d-2}}\ff_K^L$ is irreducible, and its off-diagonal entries are nonnegative. 
 \end{lemma}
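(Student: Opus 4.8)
The plan is to prove the positivity statement first, by induction on $d = d(K,L)$, and then to derive the statement about the Hessian from the positivity statement applied to proper sub-intervals together with interval connectedness. The ingredients are the product formula of Lemma~\ref{derivativen} for $\partial_{t_F}\ff_K^L$, the $\MOD_K^L$-invariance of $\ff_K^L$ from Lemma~\ref{lineality} (this is where we use that $\PPP$ is balanced), the effectivity of $\SMOD_K^L$ from Lemma~\ref{pos-pair-mod}, and the stability of the strictly sub-modular cones under the projections $\pi$ from Lemma~\ref{piserve}.

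For positivity, the case $d=0$ is trivial since $\ff_K^L = 1$. Assume $d\geq 1$. Then $|L\setminus K|\geq 2$, so $\SMOD_K^L$ is effective; writing $\vv_i = \uu_i + \ww_i$ with $\uu_i\in\SMOD_K^L\cap\RR_{>0}^{(K,L)}$ and $\ww_i\in\MOD_K^L$, and noting that every iterated directional derivative of $\ff_K^L$ is again $\MOD_K^L$-invariant and hence annihilated by each $D_{\ww_i}$, we reduce to the case where all $\vv_i$ have strictly positive entries. Writing $D_{\vv_d}\ff_K^L = \sum_{K<F<L}(\vv_d)_F\,\ff_K^F(\pi_K^F\ttt)\,\ff_F^L(\pi_F^L\ttt)$ by Lemma~\ref{derivativen}, applying $D_{\vv_1}\cdots D_{\vv_{d-1}}$, and expanding with the Leibniz rule and the chain rule $D_{\vv}(g\circ\pi) = (D_{\pi\vv}g)\circ\pi$ for linear $\pi$, the only surviving terms split the $d-1$ operators as $d(K,F)+d(F,L)$, each such term being a product $\big(\prod_{i\in A}D_{\pi_K^F\vv_i}\ff_K^F\big)\big(\prod_{j\in B}D_{\pi_F^L\vv_j}\ff_F^L\big)$. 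By Lemma~\ref{piserve} the directions lie in $\SMOD_K^F$ and $\SMOD_F^L$ respectively, so by the inductive hypothesis (valid since $d(K,F), d(F,L) < d$) each factor is positive, empty products being $1$. Since $(\vv_d)_F > 0$ and the sum over $F$ is non-empty ($d\geq 1$ forces $(K,L)_\PPP\neq\varnothing$), we get $D_{\vv_1}\cdots D_{\vv_d}\ff_K^L > 0$.

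For the Hessian statement assume $d\geq 2$ and $\PPP$ interval connected, and set $q = D_{\vv_1}\cdots D_{\vv_{d-2}}\ff_K^L$, a quadratic form in the variables $t_F$, $F\in(K,L)_\PPP$. If $F, G$ are incomparable then $\partial_{t_F}\partial_{t_G}\ff_K^L = 0$ (from the proof of Lemma~\ref{derivativen}), so $\partial_{t_F}\partial_{t_G}q = 0$. If $F < G$, then a second application of Lemma~\ref{derivativen} — using the tower identities $\pi_K^F\pi_K^G = \pi_K^F$ and $\pi_F^G\pi_K^G = \pi_F^G$ and the fact that $\ff_G^L(\pi_G^L\ttt)$ does not involve $t_F$ — yields the factorization $\partial_{t_F}\partial_{t_G}\ff_K^L = \ff_K^F(\pi_K^F\ttt)\,\ff_F^G(\pi_F^G\ttt)\,\ff_G^L(\pi_G^L\ttt)$. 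Applying $D_{\vv_1}\cdots D_{\vv_{d-2}}$ and repeating the Leibniz/chain-rule analysis, now with three factors whose degrees sum to $d-2$, shows $\partial_{t_F}\partial_{t_G}q > 0$, using the already-proven positivity for the smaller intervals $[K,F]$, $[F,G]$, $[G,L]$ and Lemma~\ref{piserve}. Thus the off-diagonal entries of the Hessian of $q$ are nonnegative, and positive exactly for comparable pairs. Finally, given distinct $F, G\in(K,L)_\PPP$, interval connectedness gives a chain $F = F_0\lessgtr F_1\lessgtr\cdots\lessgtr F_k = G$ in $(K,L)_\PPP$ whose consecutive terms are comparable and distinct, so the entries $\partial_{t_{F_i}}\partial_{t_{F_{i+1}}}q$ are positive; this is the chain condition in the definition of irreducibility.

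The step I expect to require the most care is the projection bookkeeping: verifying that iterating Lemma~\ref{derivativen} really collapses the composed $\pi$'s to the clean product formula for $\partial_{t_F}\partial_{t_G}\ff_K^L$ (in particular that $\ff_G^L(\pi_G^L\ttt)$ is free of $t_F$ when $F < G$), and then keeping track, through the Leibniz expansion, that each vector appearing is pushed by the appropriate $\pi$ into the correct strictly sub-modular cone, so that the inductive positivity can be applied. Once the product formulas and Lemma~\ref{piserve} are in place, both conclusions come out of essentially the same computation; the effectivity reduction and the $\MOD_K^L$-invariance are routine but must be used before the Leibniz step.
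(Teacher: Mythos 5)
Your proof is correct and follows essentially the same route as the paper's: reduce to strictly positive directions via effectivity (Lemma~\ref{pos-pair-mod}) and $\MOD_K^L$-invariance (Lemma~\ref{lineality}), prove positivity by induction using the product factorizations from Lemma~\ref{derivativen} and the cone-stability Lemma~\ref{piserve}, compute that $\partial_{t_F}\partial_{t_G}\ff_K^L$ factors into three subinterval polynomials when $F<G$ and vanishes when $F,G$ are incomparable, and invoke interval connectedness for irreducibility. The only cosmetic difference is that the paper phrases the induction as ``all coefficients of $\ff_K^L(s_1\vv_1+\cdots+s_d\vv_d)$ are positive,'' whereas you make the Leibniz/chain-rule bookkeeping explicit; the substance is identical.
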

 
\begin{proof}
We start by proving the first assertion by induction on $d=d(K,L)$. We need to prove that all coefficients of the polynomial 
$\ff_K^L (s_1\vv_1+ \cdots+s_d\vv_d)$ are positive. By Lemmas \ref{pos-pair-mod} and \ref{lineality}, we may assume that all entries of $\vv_i \in  \SMOD_K^L$ are positive for all $i$. The first assertion now follows by induction  using \eqref{f-def} and Lemma \ref{piserve}.

For $K<F_1<F_2<L$, let
$$
g(\ttt)=\frac {\partial^2} {\partial t_{F_1}\partial t_{F_2}} \ff_K^L(\ttt)=  \ff_K^{F_1} (\pi_K^{F_1}(\ttt))\cdot  \ff_{F_1}^{F_2} (\pi_{F_1}^{F_2}(\ttt)) \cdot  \ff_{F_2}^L (\pi_{F_2}^L(\ttt)). 
$$
As above, it follows that all coefficients of $g(s_1\vv_1+\cdots+ s_{d-2}\vv_{d-2})$ are positive, whenever $\vv_1, \ldots, \vv_{d-2} \in \SMOD_K^L$. Hence 
$
\partial_{t_{F_1}}\partial_{t_{F_2}} D_{\vv_1} \cdots D_{\vv_{d-2}} \ff_K^L >0. 
$
Also the entries corresponding to non-comparable $F_1$ and $F_2$ are zero. Since $\PPP$ is interval connected it follows that 
the  Hessian of $D_{\vv_1} \cdots D_{\vv_{d-2}}\ff_K^L$ is irreducible.  
\end{proof}

Recall, see e.g. \cite{Oxley},  that a sub-poset $\PPP$ of $\PP(E)$ is the \emph{lattice of flats of a matroid} on $E$ if and only if 
\begin{itemize}
\item[(F1)]  $E \in \PPP$, 
\item[(F2)] If $F,G \in \PPP$, then $F \cap G \in \PPP$,  
\item[(F3)] For each $K$ in $\PPP$, $\{A\setminus K\}_{K \prec A}$ partitions $E \setminus K$. 
\end{itemize}
Hence, the lattice of flats of a matroid is $1$-balanced. Also lattices of flats of matroids are graded and semimodular, see \cite[Chapter 1.7]{Oxley}. 
Recall \cite[Prop. 3.3.2]{stanley} that a finite lattice $\LL$ is \emph{semimodular} if and only if for all $a,b \in \LL$, 
$$
\mbox{if $a$ and $b$ cover $a \wedge  b$, then $a \vee b$ covers $a$ and $b$.}
$$
From semimodularity it follows that lattices of flats of matroids are interval connected. From $(\mathrm{F1})$--$(\mathrm{F3})$ it follows that each closed interval of the lattice flats of a matroid is again the lattice of flats of a matroid. 
 
\begin{theorem}\label{mainChow}
If $[K,L]_\PPP$ is the lattice of flats of a matroid, then $\ff_K^L$ is $\SMOD_K^L$-Lorentzian.  
\end{theorem}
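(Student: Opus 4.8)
The plan is to induct on $d=d(K,L)$, with the inductive step supplied by Proposition~\ref{engine} and the base cases $d\le 2$ treated by hand. Since $\ff_K^L$ depends only on the interval $[K,L]_\PPP$, and since by $(\mathrm{F1})$--$(\mathrm{F3})$ every closed interval of the lattice of flats of a matroid is again the lattice of flats of a matroid, we may assume $\PPP=[K,L]_\PPP$ is itself such a lattice; then $\PPP$ is $1$-balanced, interval connected, graded and semimodular, and $\SMOD_K^L$ is open, convex and effective (Lemma~\ref{pos-pair-mod}) with lineality space $\MOD_K^L$. We view $\ff_K^L$ as a polynomial in the variables $t_F$ indexed by the flats $F$ with $K<F<L$ (the only variables it involves), on which its Hessian is irreducible by Lemma~\ref{positivity}.

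When $d\le 1$ the polynomial $\ff_K^L$ equals $1$ or the linear form $\sum_{K\prec F\prec L}t_F$; it has nonnegative coefficients and is positive on $\SMOD_K^L$ by Lemma~\ref{positivity}, hence is $\SMOD_K^L$-Lorentzian by Remark~\ref{altdef}. The case $d=2$ is the only genuinely new computation. Here the flats strictly between $K$ and $L$ are the atoms $a$ and the coatoms $g$, and $\ff_K^L$ is the quadratic \eqref{case2}; positivity of $D_{\vv_1}D_{\vv_2}\ff_K^L$ on $\SMOD_K^L$ is Lemma~\ref{positivity}, so it remains to show that the Hessian $A$ of $\ff_K^L$ has exactly one positive eigenvalue. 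Using that the atoms below a coatom $g$ partition $g\setminus K$ (so $|g\setminus K|=\sum_{a<g}|a\setminus K|$) and that two distinct atoms lie below a unique common coatom, a short computation from \eqref{case2} gives $\partial_{t_g}^2\ff_K^L=-1$ for each coatom $g$, $\partial_{t_a}\partial_{t_g}\ff_K^L=1$ whenever $a<g$, $\partial_{t_a}^2\ff_K^L=-(r_a-1)$ where $r_a$ is the number of coatoms above $a$, and that all remaining second partials vanish. Hence the coatom block of $A$ is $-I$ and the Schur complement of $A$ with respect to it is the symmetric matrix on the atoms with $(a,a')$-entry $|\{g:a<g \text{ and } a'<g\}|-(r_a-1)\delta_{aa'}$, which by the two facts above equals $1$ for every pair $a,a'$ --- the all-ones matrix, with exactly one positive eigenvalue. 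By inertia additivity $A$ has exactly one positive eigenvalue, so $\ff_K^L$ is $\SMOD_K^L$-Lorentzian.

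For $d\ge 3$ I would apply Proposition~\ref{engine} to $f=\ff_K^L$ and $\CCC=\SMOD_K^L$. Hypothesis~(1), invariance of $\ff_K^L$ under translation by $\MOD_K^L$, is Lemma~\ref{lineality} (using that $\PPP$ is balanced); hypothesis~(2), positivity of $D_{\vv_1}\cdots D_{\vv_d}\ff_K^L$ on $\SMOD_K^L$, and hypothesis~(3), irreducibility together with nonnegativity of the off-diagonal entries of the Hessian of $D_{\vv_1}\cdots D_{\vv_{d-2}}\ff_K^L$, are the two assertions of Lemma~\ref{positivity} (the latter using interval connectedness). For hypothesis~(4), Lemma~\ref{derivativen} gives $\partial_{t_F}\ff_K^L=\ff_K^F(\pi_K^F(\ttt))\cdot\ff_F^L(\pi_F^L(\ttt))$; the intervals $[K,F]_\PPP$ and $[F,L]_\PPP$ are lattices of flats of matroids of rank strictly less than that of $[K,L]_\PPP$, so by the induction hypothesis $\ff_K^F$ is $\SMOD_K^F$-Lorentzian and $\ff_F^L$ is $\SMOD_F^L$-Lorentzian; since $\pi_K^F$ and $\pi_F^L$ are linear maps carrying $\SMOD_K^L$ into $\SMOD_K^F$ and into $\SMOD_F^L$ respectively (Lemma~\ref{piserve}), the pullbacks $\ff_K^F(\pi_K^F(\ttt))$ and $\ff_F^L(\pi_F^L(\ttt))$ are $\SMOD_K^L$-Lorentzian by Remark~\ref{altdef}, and their product is $\SMOD_K^L$-Lorentzian by Proposition~\ref{prod}. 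Proposition~\ref{engine} then yields that $\ff_K^L$ is $\SMOD_K^L$-Lorentzian, closing the induction.

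I expect the $d=2$ base case to be the main obstacle, in the sense that it is the only point where the matroid structure must be used essentially: the hypotheses of Proposition~\ref{engine} do not by themselves force a single positive eigenvalue when $d=2$ --- for instance $\tfrac{1}{2}(x_1^2+x_2^2+x_3^2)+x_1x_2+x_2x_3$ satisfies all of them with $\CCC=\RR^3_{>0}$ yet its Hessian has two positive eigenvalues --- so the rank-$3$ geometric lattice has to enter, and the content is exactly the collapse of the Schur complement to the all-ones matrix. Everything else is bookkeeping: chiefly keeping track of which cone each factor of $\partial_{t_F}\ff_K^L$ is Lorentzian with respect to, and invoking the induction hypothesis for the subintervals in hypothesis~(4) of Proposition~\ref{engine}.
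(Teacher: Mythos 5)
Your proof is correct and follows the same overall architecture as the paper: induct on $d(K,L)$, invoke Proposition~\ref{engine} for $d\ge 3$ (with hypotheses (1)--(3) supplied by Lemmas~\ref{lineality} and \ref{positivity}, and hypothesis (4) by Lemmas~\ref{derivativen} and \ref{piserve}, Proposition~\ref{prod} and the induction hypothesis), and treat $d=2$ by hand. You also correctly diagnose that $d=2$ is the one place where the matroid structure must enter irreducibly, since Proposition~\ref{engine} requires $d\ge 3$; your example $\tfrac12(x_1^2+x_2^2+x_3^2)+x_1x_2+x_2x_3$ indeed satisfies (1)--(4) with $\CCC=\RR^3_{>0}$ and has Hessian of signature $(2,1,0)$.

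Where you genuinely diverge from the paper is in the $d=2$ base case. The paper manipulates the identity~\eqref{case2} directly, using $1$-balancedness and semimodularity to produce the explicit difference-of-squares expression
$2\ff_K^L=\bigl(\sum_F t_F\bigr)^2-\sum_G\bigl(t_G-\sum_{F<G}t_F\bigr)^2$,
which exhibits the signature $(1,\cdot)$ by inspection. You instead compute the Hessian entrywise -- $\partial_{t_g}^2\ff_K^L=-1$, $\partial_{t_a}^2\ff_K^L=-(r_a-1)$, $\partial_{t_a}\partial_{t_g}\ff_K^L=\mathbf 1[a<g]$, the rest zero (I checked these against~\eqref{case2} and they are right) -- block-decompose along atoms and coatoms, take the Schur complement with respect to the $-I$ coatom block, observe that it collapses to the all-ones matrix, and finish by Haynsworth inertia additivity. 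Both arguments use exactly the same two matroid facts (atoms below a coatom partition it; any two distinct atoms lie below a unique common coatom), so the mathematical content coincides. The paper's version is shorter once the identity is guessed; your version is more systematic and makes transparent precisely which entry of the Schur complement each matroid fact controls, at the cost of invoking inertia additivity. Both are fine, and this is a legitimate alternative route through the base case.
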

\begin{proof}
We shall apply Proposition \ref{engine} for $\SMOD=\SMOD_K^L$ and $\WWW=\MOD_K^L$. The proof is by induction on $d(K,L)$. The case when $d(K,L) \leq 1$ is clear. Also, by Lemma \ref{lineality} and  Lemma \ref{positivity} and the discussion preceding the lemma, (1)--(3) of Proposition \ref{engine} are satisfied.  Suppose $d(K,L) \geq 2$. By Lemma \ref{derivativen}, Lemma \ref{piserve}, Proposition \ref{prod} and induction, $\partial_{t_F} \ff_K^L$ is $\SMOD_K^L$-Lorentzian for each $F \in (K,L)_\PPP$. Hence it remains to prove the case when $d(K,L)=2$. 

Suppose $d(K,L)=2$. We need to prove that the Hessian of $\ff_K^L$ has exactly one positive eigenvalue. Denote flats of rank one in $[K,L]_\PPP$ by $F$, and flats of rank two by $G$. 

Since $[K,L]_\PPP$ is $1$-balanced,
$$
\sum_{F:F<G} \frac {|F\setminus K|}{|G\setminus K|} = 1 \ \ \mbox{ and } \ \ \sum_{G: G>F} \frac {|L\setminus G|} {|L \setminus F|} = |\{G: G >F\}| -1. 
$$
We deduce from \eqref{case2} that $2\ff_K^L=
2 \sum_{F<G} t_Ft_G -  \sum_{F<G} t_F^2+ \sum_{F} t_F^2-\sum_{G} t_G^2.
$ 
Notice that 
$$
\sum_{G} \left(t_G-\sum_{F<G}t_F\right)^2= \sum_Gt_G^2 -2 \sum_{F<G} t_Ft_G+ \sum_{F<G}t_F^2+ \sum_G\sum_{\stackrel{F_1\neq F_2}{F_1,F_2 < G}}t_{F_1}t_{F_2}. 
$$
Since $[K,L]_\PPP$ is semimodular, 
$$
\sum_G\sum_{\stackrel{F_1\neq F_2}{F_1,F_2 < G}}t_{F_1}t_{F_2}=\sum_{F_1\neq F_2}t_{F_1}t_{F_2}=\left(\sum_{F}t_F\right)^2-\sum_{F}t_F^2.
$$
Combining the equations above, we conclude 
$$
2 \ff_K^L= \left(\sum_{F}t_F\right)^2- \sum_{G} \left(t_G-\sum_{F<G}t_F\right)^2,
$$
which proves that the Hessian of $\ff_K^L$ has exactly one positive eigenvalue.
\end{proof}

\section{Log-concavity of the reduced characteristic polynomial}\label{lc}
The \emph{characteristic polynomial} of a matroid $\MM$ is 
\begin{equation}\label{chara}
\chi_{\MM}(t) = \sum_{F \in \LL(\MM)} \mu(\varnothing, F) t^{r(F,E)}. 
\end{equation}
If $\MM$ has rank at least one, then $\chi_\MM(t)$ is divisible by $t-1$, see \cite[Section 7]{White87}. The  \emph{reduced characteristic polynomial}  of a matroid $\MM$   is then 
$
\overline{\chi}_\MM(t) =  {\chi_\MM(t)}/(t-1)
$. Recall that a sequence $\{a_k\}_{k=0}^d$ is \emph{log-concave} if $a_k^2 \geq a_{k-1}a_{k+1}$ for all $0< k <d$. The next theorem, which was first proved by Adiprasito, Huh and Katz \cite{AHK}, solved the Heron-Rota-Welsh conjecture. We provide an alternative  proof below. 
\begin{theorem}\label{HRWconj}
The absolute values of the coefficients of the reduced characteristic polynomial of a matroid form a log-concave sequence. 
\end{theorem}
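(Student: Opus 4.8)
The plan is to deduce Theorem~\ref{HRWconj} from Theorem~\ref{mainChow} by exhibiting the (signed) coefficients of the reduced characteristic polynomial as the coefficients of a suitable specialization of a one-variable slice of an $\SMOD_K^L$-Lorentzian polynomial, and then invoking the fact that Lorentzian polynomials in two variables have log-concave coefficient sequences. Concretely, I would take $\PPP$ to be the lattice of flats $\LL(\MM)$ of the matroid $\MM$, with $K=\varnothing$ and $L=E$, so that $d=d(\varnothing,E)=r(E)-1$ and $\ff_\varnothing^E$ is $\SMOD_\varnothing^E$-Lorentzian of degree $d$ by Theorem~\ref{mainChow}.

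The first substantive step is to identify, for two vectors $\uu,\vv\in\overline{\SMOD_\varnothing^E}$, the coefficients of the bivariate polynomial $(y,z)\mapsto \ff_\varnothing^E(y\uu+z\vv)$ with (up to positive factors and the normalization $\binom{d}{k}$ coming from the homogeneous expansion) the mixed volume-type numbers $\deg(\alpha^{d-k}\beta^k)$ familiar from the Chow-ring picture, where one should take $\alpha$ and $\beta$ to be the analogues of the two natural nef/semiample classes. In the present polynomial language this means choosing $\uu$ and $\vv$ to be the two specific boundary vectors of $\SMOD_\varnothing^E$ whose associated directional derivatives extract, respectively, the "$\alpha$-degrees" and "$\beta$-degrees"; a clean choice is $\uu = (|S|)_{\varnothing\subset S\subset E}$ and $\vv = (|E\setminus S|)_{\varnothing\subset S\subset E}$ (both lie in $\overline{\SMOD_\varnothing^E}$, being limits of strictly submodular vectors, or one perturbs slightly into the open cone and passes to the limit at the end using that $\CCC$-Lorentzian-ness is a closed condition in the relevant sense). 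One then checks, by induction on $d$ using the recursion \eqref{f-def} together with Lemma~\ref{derivativen} and the behaviour of the projections $\pi_F^L$ on these vectors, that $\ff_\varnothing^E(y\uu+z\vv) = \sum_{k=0}^d c_k\, y^{d-k} z^{k}$ with $c_k$ a positive multiple of $|\overline{\chi}_\MM|$'s $k$-th coefficient; the key combinatorial identity is the Weisner-type / deletion-contraction expansion of $\overline{\chi}_\MM$ matching the recursive structure of $\ff$.

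Granting that identification, the conclusion is immediate: by Remark~\ref{altdef}, $(y,z)\mapsto\ff_\varnothing^E(y\uu+z\vv)$ is a Lorentzian polynomial in two variables with nonnegative coefficients (for $\uu,\vv$ in the open cone; then take limits), and it is a standard fact about Lorentzian polynomials---indeed it is essentially property (H) for the single Hessian of the bivariate form, i.e. the $2\times 2$ Hessian of each second-to-last derivative has at most one positive eigenvalue, which forces the Newton-polytope-and-discriminant inequality---that the coefficient sequence $\{c_k/\binom{d}{k}\}$, hence also $\{c_k\}$, is log-concave with no internal zeros. Rescaling by the positive constants relating $c_k$ to the characteristic-polynomial coefficients preserves log-concavity, giving the theorem.

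The main obstacle I anticipate is precisely the bookkeeping in the second step: pinning down that the bivariate slice of $\ff_\varnothing^E$ along these two distinguished cone directions really does reproduce $\overline{\chi}_\MM$ (with the correct sign pattern and the correct binomial normalization), rather than some other sequence of "mixed degrees". This requires carefully tracking how $\pi_K^F$ and $\pi_F^L$ act on $\uu$ and $\vv$---one expects $\pi_K^F(\uu)$ and $\pi_F^L(\vv)$ to again be the analogous distinguished vectors for the subintervals, while the "cross" terms $\pi_K^F(\vv)$, $\pi_F^L(\uu)$ degenerate appropriately---and then matching the resulting recursion for $c_k$ against the known recursion for the coefficients of $\chi_\MM/(t-1)$ via Möbius-function / deletion-contraction identities (cf.\ \cite{AHK, White87}). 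Everything else---positivity, the passage from the open cone to its closure, and the final log-concavity extraction---is routine given the machinery already developed in Sections \ref{Lor-cones}--\ref{posetsec}.
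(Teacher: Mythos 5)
Your proposal is correct and takes essentially the same route as the paper: your vectors $\uu=(|S|)_S$ and $\vv=(|E\setminus S|)_S$ are just $|E|\cdot\alpha_\varnothing^E$ and $|E|\cdot\beta_\varnothing^E$, and the "bookkeeping" step you correctly identify as the main obstacle is exactly what Lemmas~\ref{alpha-beta}, \ref{volalpha}, \ref{volbeta}, Theorem~\ref{comp}, and Lemma~\ref{red} (via Weisner's theorem) accomplish. The final log-concavity deduction from the bivariate Lorentzian slice is likewise the same.
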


Consider the following two elements in the closure of $\SMOD_K^L$:
$$\alpha_K^L= \left(\frac {|S\setminus K|}{|L\setminus K|} \right)_{K\subset S \subset L} \mbox{ and } \ \ \beta_K^L = \left( \frac {|L\setminus S|}{|L\setminus K| } \right)_{K\subset S \subset L}.
$$
For $i \in L\setminus K$, let $\alpha_{K,i}^L=(a_S)_{K\subset S \subset L}$ and $\beta_{K,i}^L=(b_S)_{K\subset S \subset L}$ be the $0/1$-valued vectors defined by 
$a_S=1$ if and only if $i \in S$ and $b_S=1$ if and only if $i \not \in S$.
Then 
\begin{equation}\label{modeq}
\alpha_{K}^L-\alpha_{K,i}^L \in \MOD_K^L \ \ \mbox{ and } \ \ \beta_{K}^L-\beta_{K,i}^L \in \MOD_K^L,
\end{equation}
for all $i \in L \setminus K$. 

The elements $\alpha_K^L$ and $\beta_K^L$ behave well under the projections considered Section~\ref{posetsec}. We leave the proof to the reader. 

\begin{lemma}\label{alpha-beta}
If $K<F<L$, then  
$$
\pi_K^F(\alpha_K^L)= 0,  \ \ \ \pi_K^F(\beta_K^L) = \beta_K^F, \ \ \
\pi_F^L(\alpha_K^L)= \alpha_F^L,  \ \ \ \pi_F^L(\beta_K^L) = 0.
$$
\end{lemma}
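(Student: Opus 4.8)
The plan is to verify the four identities by direct substitution into the formula defining $\pi_F^G$, using only the convention $t_K=t_L=0$ built into that formula together with the additivity of cardinalities along a chain ($K\subset S\subset F\subset L$ in the first and fourth identities, $K\subset F\subset S\subset L$ in the second and third).

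First I would dispatch the two identities whose right-hand side is $0$, which are immediate. Specializing $\pi_F^G$ to $\pi_K^F$ means putting $F$ in the role of the upper flat and $K$ in that of the lower one, so for $K\subset S\subset F$ the $S$-coordinate of $\pi_K^F(\alpha_K^L)$ is
$$
\frac{|S\setminus K|}{|L\setminus K|}-\frac{|F\setminus K|}{|L\setminus K|}\cdot\frac{|S\setminus K|}{|F\setminus K|}-0=0,
$$
the last term being $t_K\cdot\tfrac{|F\setminus S|}{|F\setminus K|}$, which vanishes since $t_K=0$. Symmetrically, for $\pi_F^L$ the flat $F$ plays the role of the lower flat and $L$ that of the upper one, so for $F\subset S\subset L$ the $S$-coordinate of $\pi_F^L(\beta_K^L)$ is
$$
\frac{|L\setminus S|}{|L\setminus K|}-0-\frac{|L\setminus F|}{|L\setminus K|}\cdot\frac{|L\setminus S|}{|L\setminus F|}=0,
$$
where this time the vanishing term carries the factor $t_L=0$.

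For the remaining two identities the same substitution needs one line of simplification. For $K\subset S\subset F\subset L$ put $a=|S\setminus K|$, $b=|F\setminus S|$, $c=|L\setminus F|$, so that $|F\setminus K|=a+b$, $|L\setminus S|=b+c$, and $|L\setminus K|=a+b+c$. Then the $S$-coordinate of $\pi_K^F(\beta_K^L)$ equals
$$
\frac{b+c}{a+b+c}-\frac{c}{a+b+c}\cdot\frac{a}{a+b}=\frac{(b+c)(a+b)-ca}{(a+b+c)(a+b)}=\frac{b(a+b+c)}{(a+b+c)(a+b)}=\frac{b}{a+b}=\frac{|F\setminus S|}{|F\setminus K|},
$$
which is the $S$-coordinate of $\beta_K^F$. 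The identity $\pi_F^L(\alpha_K^L)=\alpha_F^L$ is the mirror image: for $K\subset F\subset S\subset L$ set $p=|F\setminus K|$, $q=|S\setminus F|$, $r=|L\setminus S|$, and the $S$-coordinate of $\pi_F^L(\alpha_K^L)$ becomes
$$
\frac{p+q}{p+q+r}-\frac{p}{p+q+r}\cdot\frac{r}{q+r}=\frac{q(p+q+r)}{(p+q+r)(q+r)}=\frac{q}{q+r}=\frac{|S\setminus F|}{|L\setminus F|},
$$
as needed.

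There is no genuine obstacle here; the only things to watch are which of $t_K$ and $t_L$ is the coordinate set to zero in a given projection — this is exactly what makes the two ``vanishing'' identities collapse — and the chain additivity of cardinalities. Given how mechanical the argument is, I would write out the two vanishing cases and the computation for $\pi_K^F(\beta_K^L)$ in full and leave the symmetric case $\pi_F^L(\alpha_K^L)=\alpha_F^L$ to the reader, matching the level of detail the surrounding text uses for such lemmas.
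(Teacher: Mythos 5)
Your proof is correct, and since the paper explicitly leaves this lemma to the reader there is no in-text argument to diverge from; the direct substitution into the formula for $\pi_F^G$, with careful tracking of which of $t_K$, $t_L$ is the coordinate that vanishes, is exactly the intended routine verification. The chain-additivity substitutions ($a,b,c$ and $p,q,r$) and the resulting algebraic simplifications all check out.
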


\begin{lemma}\label{volalpha}
If  $\PPP$ is $1$-balanced, then 
$
\ff_K^L(\alpha_K^L) = 1 /{d(K,L)!}  
$
for all $K<L$.  
\end{lemma}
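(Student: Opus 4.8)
The plan is to prove $\ff_K^L(\alpha_K^L) = 1/d(K,L)!$ by induction on $d = d(K,L)$, using the defining recursion~\eqref{f-def} together with the behaviour of $\alpha_K^L$ under the projections $\pi$ recorded in Lemma~\ref{alpha-beta}. The base case $d=0$ is immediate since $\ff_K^L = 1$ and $0! = 1$. For the inductive step with $d \geq 1$, I would evaluate the recursion
$$
d(K,L) \cdot \ff_K^L(\alpha_K^L) = \sum_{K<F<L} (\alpha_K^L)_F \cdot \ff_K^F\big(\pi_K^F(\alpha_K^L)\big) \cdot \ff_F^L\big(\pi_F^L(\alpha_K^L)\big),
$$
where $(\alpha_K^L)_F = |F\setminus K|/|L\setminus K|$ denotes the $F$-th coordinate of $\alpha_K^L$.

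By Lemma~\ref{alpha-beta}, we have $\pi_K^F(\alpha_K^L) = 0$, so the factor $\ff_K^F\big(\pi_K^F(\alpha_K^L)\big)$ collapses. The subtlety here is that $\ff_K^F$ is a polynomial of degree $d(K,F)$, so $\ff_K^F(0)$ equals $1$ when $d(K,F)=0$ (i.e.\ $K \prec F$) and equals $0$ otherwise. Hence only the terms with $K \prec F$ survive in the sum. For such $F$, Lemma~\ref{alpha-beta} also gives $\pi_F^L(\alpha_K^L) = \alpha_F^L$, and since $[F,L]_\PPP$ is again (a closed interval of) the relevant $1$-balanced poset with $d(F,L) = d(K,L) - 1$, the induction hypothesis yields $\ff_F^L(\alpha_F^L) = 1/(d(K,L)-1)!$. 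Therefore
$$
d(K,L) \cdot \ff_K^L(\alpha_K^L) = \sum_{K \prec F} \frac{|F\setminus K|}{|L\setminus K|} \cdot \frac{1}{(d(K,L)-1)!},
$$
and it remains to check that $\sum_{K \prec F} |F\setminus K| = |L\setminus K|$; this is exactly the statement that $\{A\setminus K\}_{K \prec A}$ partitions $L\setminus K$, which holds because $\PPP$ is $1$-balanced. Plugging this in gives $d(K,L)\cdot \ff_K^L(\alpha_K^L) = 1/(d(K,L)-1)!$, i.e.\ $\ff_K^L(\alpha_K^L) = 1/d(K,L)!$, completing the induction.

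The main obstacle, such as it is, is bookkeeping rather than conceptual: one must be careful that in the interval $[F,L]_\PPP$ the covering relation ``$F \prec A$'' used to invoke $1$-balancedness is the same as the ambient one, and that the partition property in the definition of $1$-balanced is applied to the correct interval $[K,L]_\PPP$ (not all of $\PP(E)$). One should also note that when $d(K,L) = 1$ the sum over $K \prec F \prec L$ degenerates appropriately — here every $F$ with $K \prec F$ automatically satisfies $F \prec L$ as well — so the formula $\ff_K^L(\alpha_K^L) = 1/1! = 1$ is consistent with $\ff_K^L(\ttt) = \sum_{K \prec F \prec L} t_F$ evaluated at $\alpha_K^L$, using $\sum_{K\prec F} |F\setminus K|/|L\setminus K| = 1$. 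No genuine difficulty arises beyond this careful tracking of the poset structure.
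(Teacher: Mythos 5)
Your proof is correct and takes essentially the same inductive route as the paper: apply the recursion \eqref{f-def} at $\alpha_K^L$, use Lemma~\ref{alpha-beta} to kill $\pi_K^F(\alpha_K^L)$ and propagate $\alpha_F^L$, observe only atoms $K\prec F$ survive, and close the induction via the partition property of $1$-balancedness. The only cosmetic difference is that the paper first trades $\alpha_K^L$ for the indicator vector $\alpha_{K,i}^L$ via the modular relation \eqref{modeq}, so that exactly one summand survives, whereas you keep the fractional weights $|F\setminus K|/|L\setminus K|$ and sum them to $1$ directly; the two computations are equivalent.
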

\begin{proof}
Let $i \in L\setminus K$, and let $\alpha_{K,i}^L=(a_S)_{K\subset S \subset L}$. Since $\PPP$ is $1$-balanced, there is a unique $H \in [K,L]_\PPP$ containing $i$ for which $K \prec H < L$. By \eqref{f-def}, \eqref{modeq} and Lemma~\ref{alpha-beta}, 
$$
d(K,L) \cdot \ff_K^L(\alpha_K^L)= \sum_{K<F<L} a_F \cdot  \ff_K^F ( 0)\cdot  \ff_F^L (\alpha_F^L) = \ff_H^L(\alpha_H^L), 
$$
from which the lemma follows by induction over $d(K,L)$. 
\end{proof}
Recall the M\"obius function of a locally finite poset \cite[Section 3.7]{stanley}.  The next theorem is usually stated as a consequence of Weisner's theorem, see \cite[p. 277]{stanley}. 
\begin{theorem}[Weisner's theorem]
If $x\prec a< y$ are elements in a semimodular lattice,  then 
$
\mu(x,y) = -\sum_b\mu(x,b) 
$
where the sum is over all $b$ for which $x<b\prec y$ and $a \not < b$. 
\end{theorem}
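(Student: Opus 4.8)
The plan is to derive this from the classical Weisner identity in a general finite lattice and then convert the index set by an order‑theoretic argument in which semimodularity is the only ingredient that matters. As a first step I would prove: in \emph{any} finite lattice, if $x\le a\le y$ with $a\ne x$, then $\sum_{b\in[x,y]:\,b\vee a=y}\mu(x,b)=0$. This is the standard M\"obius double count. For $b\in[x,y]$ one has $b\vee a\in[x,y]$, so the indicator $[b\vee a=y]$ equals $\sum_{c\,:\,b\vee a\le c\le y}\mu(c,y)$; substituting this into the sum, interchanging the order of summation over $b$ and $c$ (both ranging in $[x,y]$), and using that $b\vee a\le c$ is equivalent to $b\le c$ and $a\le c$, one gets $\sum_{c\in[x,y]}\mu(c,y)\sum_{x\le b\le c,\,a\le c}\mu(x,b)$. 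The inner sum is empty unless $a\le c$, in which case it equals $\sum_{x\le b\le c}\mu(x,b)=\delta_{x,c}$; and $a\le c=x$ is impossible since $a\ne x$. Hence every term vanishes. I would include this computation, since Weisner's theorem is not quoted earlier in the paper.

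Next I would specialize to $a$ as in the statement, which satisfies the hypotheses $x\le a\le y$, $a\ne x$. The term $b=y$ occurs in the sum (because $a\le y$) and contributes $\mu(x,y)$; the term $b=x$ does not occur, because $x\vee a=a\ne y$ as $a<y$. Therefore $\mu(x,y)=-\sum_b\mu(x,b)$, the sum now being over $x<b<y$ with $b\vee a=y$, and it remains to check that this set of $b$ coincides with $\{b:\ x<b\prec y,\ a\not\le b\}$. One inclusion is free: if $x<b\prec y$ and $a\not\le b$, then $b<b\vee a\le y$, so $b\vee a=y$. For the reverse inclusion, suppose $x<b<y$ and $b\vee a=y$. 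Then $a\not\le b$ (otherwise $b=b\vee a=y$), and from $x\le a\wedge b\le a$ with $a\wedge b\ne a$ and $x\prec a$ we conclude $a\wedge b=x$. To get $b\prec y$ I would invoke the standard reformulation of semimodularity: a finite semimodular lattice is graded and its rank function $\rho$ is submodular (see \cite[Section 3.3]{stanley}). Then $\rho(y)=\rho(b\vee a)\le\rho(b)+\rho(a)-\rho(a\wedge b)=\rho(b)+1$, and since $b<y$ this forces $\rho(y)=\rho(b)+1$, i.e. $b\prec y$. Combining this with the previous step yields the identity.

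The only genuine obstacle is the reverse inclusion, that is, deducing $b\prec y$ from $b\vee a=y$. The covering‑by‑join form of semimodularity ($a\wedge b\prec a\Rightarrow b\prec a\vee b$) would give it at once, but the form recalled in the excerpt assumes that $a$ \emph{and} $b$ both cover $a\wedge b$, and covering of $a\wedge b$ by $b$ is not available; routing the argument through the submodular rank function avoids this issue. One further point worth flagging is that the condition on $b$ in the sum should be read as $a\not\le b$ rather than $a\not<b$; the two differ only at $b=a$, which is in any case absent from the sum since $a\vee a=a\ne y$.
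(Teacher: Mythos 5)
The paper does not actually prove this statement; it cites Stanley's book and presents it as the usual consequence of the general Weisner cancellation identity, so there is no in-paper proof to compare against. Your argument is correct and self-contained. The M\"obius double count establishing $\sum_{b\in[x,y]:\,b\vee a=y}\mu(x,b)=0$ for $x<a\le y$ in a finite lattice is the standard Weisner identity and your verification is right; extracting the $b=y$ term, noting $b=x$ is absent since $x\vee a=a\ne y$, and then showing $\{b: x<b<y,\ b\vee a=y\}=\{b:x<b\prec y,\ a\not\le b\}$ all check out. Your handling of the reverse inclusion is the one genuinely semimodular step, and routing it through the submodular rank function $\rho(y)=\rho(b\vee a)\le\rho(b)+\rho(a)-\rho(a\wedge b)=\rho(b)+1$ (with $a\wedge b=x$ forced by $x\prec a$ and $a\not\le b$) is exactly the right move, since the covering-by-join form recalled in the paper would require $b$ to cover $a\wedge b$, which is not available. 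You are also correct that the condition on $b$ should read $a\not\le b$ rather than $a\not< b$: the two differ only when $a$ is itself a coatom of $[x,y]$, and the paper's own application in Lemma~\ref{volbeta} -- summing over $F\not\ni i$, which for the unique flat $H$ with $K\prec H$ and $i\in H$ is precisely the condition $H\not\le F$ -- uses the $\le$ version, so this is a genuine slip in the paper's statement, not in your proof.
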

A consequence of Weisner's theorem is that the M\"obius function of a semimodular lattice $\LL$ alternates in sign, i.e., 
$(-1)^{\rho(b)-\rho(a)} \mu(a,b) \geq 0$, where $\rho$ is the rank function of $\LL$. 

\begin{lemma}\label{volbeta}
If $\PPP$ is the lattice of flats of a matroid and $K<L$ in $\PPP$, then
$
\ff_K^L(\beta_K^L)= {|\mu(K,L)|}/{d(K,L)!}. 
$

\end{lemma}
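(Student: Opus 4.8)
The plan is to prove Lemma~\ref{volbeta} by induction on $d=d(K,L)$, in the spirit of the proof of Lemma~\ref{volalpha} but now exploiting the complementary behaviour of $\beta_K^L$ under the projections $\pi_F^L$. The case $d(K,L)=0$ is immediate: then $K\prec L$, so $\ff_K^L=1$ while $\mu(K,L)=-1$, and both sides equal $1$.

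For the inductive step, assume $d=d(K,L)\geq 1$; note that every interval $[K,F]_\PPP$ with $F<L$ is again the lattice of flats of a matroid, so the hypothesis applies to it. I would evaluate the defining recursion \eqref{f-def} at $\ttt=\beta_K^L$. By Lemma~\ref{alpha-beta}, $\pi_K^F(\beta_K^L)=\beta_K^F$ and $\pi_F^L(\beta_K^L)=0$; since $\ff_F^L$ is homogeneous of degree $d(F,L)$, the factor $\ff_F^L(\pi_F^L(\beta_K^L))=\ff_F^L(0)$ vanishes unless $d(F,L)=0$, i.e. $F\prec L$, in which case it equals $1$. As $(\beta_K^L)_F=|L\setminus F|/|L\setminus K|$, this yields
$$ d\cdot\ff_K^L(\beta_K^L)=\frac{1}{|L\setminus K|}\sum_{K<F\prec L}|L\setminus F|\cdot\ff_K^F(\beta_K^F). $$
Applying the induction hypothesis (and $d(K,F)=d-1$ for $F\prec L$) reduces the lemma to the combinatorial identity
$$ \sum_{K<F\prec L}|L\setminus F|\cdot|\mu(K,F)|=|L\setminus K|\cdot|\mu(K,L)|. $$

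To establish this identity I would expand $|L\setminus F|=\#\{i\in L\setminus K:i\notin F\}$ and interchange the two summations, thereby reducing it to the claim that for each fixed $i\in L\setminus K$,
$$ \sum_{\substack{K<F\prec L\\ i\notin F}}|\mu(K,F)|=|\mu(K,L)|. $$
By (F3) there is a unique atom $a_i$ of $[K,L]_\PPP$ with $i\in a_i$, and for any flat $F$ with $K\leq F\leq L$ one has $i\in F$ if and only if $a_i\leq F$; hence, by semimodularity, the coatoms $F$ appearing on the left are precisely those with $a_i\vee F=L$. Weisner's theorem applied to the atom $a_i$ then gives $\mu(K,L)=-\sum_F\mu(K,F)$, the sum ranging over exactly these coatoms. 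Finally, since $[K,L]_\PPP$ is semimodular its M\"obius function alternates in sign, so $\mu(K,F)=(-1)^{d}|\mu(K,F)|$ for each coatom $F$ (which has $r(K,F)=d$) and $\mu(K,L)=(-1)^{d+1}|\mu(K,L)|$; substituting these into Weisner's identity cancels the signs and produces the displayed per-element identity. Summing over the $|L\setminus K|$ choices of $i$ completes the induction.

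The manipulation of \eqref{f-def} together with Lemma~\ref{alpha-beta} is routine, as is the sign-alternation bookkeeping. The heart of the argument --- and the step I expect to require the most care --- is the M\"obius-theoretic identity, specifically the application of Weisner's theorem to the atom $a_i$ spanned by a single ground-set element: one has to verify that the coatoms occurring in Weisner's sum are exactly the flats $F$ with $a_i\not\leq F$, equivalently $i\notin F$. (When $d=1$, where $a_i$ is itself a coatom, the per-element identity may alternatively be read off directly from the M\"obius recursion, since a rank-two interval with $k$ atoms has $\mu(K,L)=k-1$.)
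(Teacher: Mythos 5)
Your proof is correct and is essentially the paper's argument. The only (minor) difference is one of bookkeeping: you evaluate the recursion \eqref{f-def} at $\beta_K^L$ itself, getting the weighted sum $\tfrac{1}{|L\setminus K|}\sum_{K<F\prec L}|L\setminus F|\cdot\ff_K^F(\beta_K^F)$, and then expand $|L\setminus F|$ and swap summations to reach the per-element Weisner identity for each $i\in L\setminus K$; the paper instead uses \eqref{modeq} together with Lemma~\ref{lineality} to replace $\beta_K^L$ by the $0/1$-vector $\beta_{K,i}^L$ at the outset, which lands directly on the per-element identity $\sum_{F\not\ni i,\,K<F\prec L}|\mu(K,F)|=|\mu(K,L)|$ without any interchange of sums. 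Both routes close with Weisner's theorem applied to the atom over $i$, and you correctly flag the one subtlety (the $d(K,L)=1$ edge case, where that atom is itself a coatom).
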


\begin{proof}
Let $i \in L\setminus K$, and let $\beta_{K,i}^L=(b_S)_{K\subset S \subset L}$. Then, by \eqref{f-def}, \eqref{modeq} and Lemma~\ref{alpha-beta}, 
$$
d(K,L) \cdot \ff_K^L(\beta_K^L)= \sum_{K<F<L} b_F  \cdot \ff_K^F(\beta_K^F) \cdot  \ff_F^L(0)= 
\sum_{\stackrel{F \not \ni i}{\tiny{K< F \prec L}}}\ff_K^F(\beta_K^F).
$$
The lemma now follows by induction and Weisner's theorem. 
\end{proof}

\begin{theorem}\label{comp}
Suppose $[K,L]_\PPP$ is the lattice of flats of a matroid. If  $i \in L\setminus K$, then 
$$
d(K,L)! \cdot \ff_K^L(s\alpha_K^L +t\beta_K^L) = \sum_{\stackrel {K \leq F < L} {i \not \in F}} \binom {d(K,L)}{r(K,F)}  \cdot |\mu(K,F)| \cdot t^{r(K,F)} s^{d(F,L)} . 
$$
\end{theorem}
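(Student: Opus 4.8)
The plan is to prove the identity by induction on $d = d(K,L)$, using the recursion \eqref{f-def} together with the behavior of $\alpha_K^L$ and $\beta_K^L$ under the projections $\pi$, as recorded in Lemma~\ref{alpha-beta}, and the Weisner-type recursion for $|\mu|$ already exploited in Lemma~\ref{volbeta}. The base case $d(K,L)\le 1$ is immediate: for $d=0$ both sides equal $1$, and for $d=1$ the polynomial $\ff_K^L$ is $\sum_{K\prec F\prec L} t_F$, so $\ff_K^L(s\alpha+t\beta) = \sum_{K\prec F\prec L}\big(s\,\tfrac{|F\setminus K|}{|L\setminus K|} + t\,\tfrac{|L\setminus F|}{|L\setminus K|}\big)$; since $[K,L]$ is $1$-balanced, $\{F\setminus K\}_{K\prec F\prec L}$ partitions $L\setminus K$, so this telescopes to $s\cdot\#\{F\} /\#\{\cdot\} + \dots$, and one checks it equals $s + t\,|\mu(K,L)|$ (here $|\mu(K,L)| = \#\{F\} - 1$ for a rank-$2$ matroid lattice), matching the right-hand side.

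For the inductive step I would substitute $\ttt = s\alpha_K^L + t\beta_K^L$ into \eqref{f-def}. By linearity of $\pi_K^F$ and $\pi_F^L$ and by Lemma~\ref{alpha-beta}, $\pi_K^F(s\alpha_K^L+t\beta_K^L) = t\,\beta_K^F$ and $\pi_F^L(s\alpha_K^L+t\beta_K^L) = s\,\alpha_F^L$. Also $\alpha_K^L$ and $\beta_K^L$ evaluate on the coordinate $t_F$ to $\tfrac{|F\setminus K|}{|L\setminus K|}$ and $\tfrac{|L\setminus F|}{|L\setminus K|}$ respectively, so the coefficient of $t_F$ in $s\alpha_K^L+t\beta_K^L$ is $\tfrac{s|F\setminus K| + t|L\setminus F|}{|L\setminus K|}$. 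Hence
\[
d(K,L)\cdot \ff_K^L(s\alpha_K^L+t\beta_K^L) = \sum_{K<F<L} \frac{s|F\setminus K|+t|L\setminus F|}{|L\setminus K|}\cdot \ff_K^F(t\beta_K^F)\cdot \ff_F^L(s\alpha_F^L).
\]
Now apply the inductive hypothesis to each factor: $\ff_K^F(t\beta_K^F) = \dfrac{|\mu(K,F)|}{d(K,F)!}\,t^{d(K,F)}$ (the $s=0$ specialization, consistent with Lemma~\ref{volbeta}) and $\ff_F^L(s\alpha_F^L) = \dfrac{1}{d(F,L)!}\,s^{d(F,L)}$ (the $t=0$ specialization, consistent with Lemma~\ref{volalpha}). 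This turns the sum into an explicit sum over $K<F<L$ of monomials $c_F\, t^{r(K,F)} s^{d(F,L)}$ plus a "mixed" term where the linear factor $\tfrac{s|F\setminus K|+t|L\setminus F|}{|L\setminus K|}$ contributes the extra power of $s$ versus $t$; the term proportional to $t$ raises the $t$-degree by one and the term proportional to $s$ raises the $s$-degree by one, and since $r(K,F)+d(F,L)+1 = d(K,L)$, each such product has total degree $d(K,L)$ as it must.

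The main bookkeeping step — and the place I expect the real work to be — is reconciling the two contributions so that, for a fixed flat $F_0$ with $i\notin F_0$, the coefficient of $t^{r(K,F_0)}s^{d(F_0,L)}$ in the right-hand side of \eqref{f-def} comes out to $\binom{d(K,L)}{r(K,F_0)}|\mu(K,F_0)|$ after dividing by $d(K,L)$. A term $t^{r(K,F_0)}s^{d(F_0,L)}$ arises in two ways: from the flat $F = F_0$ paired with its $s$-proportional linear term (giving $t^{d(K,F_0)}\cdot s^{d(F_0,L)+1}$, i.e. the wrong split — this forces us instead to look at $F$ with $d(K,F) = r(K,F_0)-1$ or $d(K,F)=r(K,F_0)$), so more carefully: fixing the target exponent pair $(a,b)$ with $a+b = d(K,L)$, contributions come from flats $F$ with either $(d(K,F),d(F,L)) = (a-1,b)$ via the $t$-term, or $(d(K,F),d(F,L)) = (a,b-1)$ via the $s$-term. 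Collecting these and using $|F\setminus K| = \sum_{K\prec A\le F}|A\setminus K|$-type identities from $1$-balancedness together with the Weisner recursion $|\mu(K,F_0)| = \sum_{F_0\succ F,\ i\notin F}|\mu(K,F)|$ (valid for the matroid lattice $[K,L]$), one extracts exactly the claimed binomial coefficient: the identity $\binom{d-1}{a-1} + \binom{d-1}{a} = \binom{d}{a}$ will appear, the weights $\tfrac{|F\setminus K|}{|L\setminus K|}$ and $\tfrac{|L\setminus F|}{|L\setminus K|}$ reorganizing into the ratio $\tfrac{a}{d}$ and $\tfrac{b}{d}$ needed to cancel the leading $\tfrac{1}{d(K,L)}$ against the factorials $d(K,F)!\,d(F,L)!$. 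The condition $i\notin F_0$ on the index set is precisely what the Weisner recursion enforces at each stage, exactly as in the proof of Lemma~\ref{volbeta}. I would carry out this coefficient-matching cleanly by first proving the two single-variable specializations (Lemmas~\ref{volalpha} and \ref{volbeta} already do this) and then checking the bivariate identity degree-by-degree in $t$, which reduces the whole thing to the Vandermonde/Pascal identity above.
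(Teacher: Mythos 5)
Your setup (induction on $d(K,L)$, the projection identities of Lemma~\ref{alpha-beta}, the single-variable evaluations Lemmas~\ref{volalpha} and~\ref{volbeta}) is in the right spirit, but your route diverges from the paper's in a way that leaves a real gap, and you acknowledge as much when you call the coefficient-matching ``the place I expect the real work to be.'' Substituting $\ttt = s\alpha_K^L + t\beta_K^L$ directly into \eqref{f-def} leaves you with the weight $\tfrac{s|F\setminus K| + t|L\setminus F|}{|L\setminus K|}$ in front of each term. Collecting the coefficient of $t^a s^b$, you get two sums: one over flats $F$ with $r(K,F)=a$ (all of them, not just those avoiding $i$) weighted by $|L\setminus F|/|L\setminus K|$, and one over flats with $r(K,F)=a+1$ weighted by $|F\setminus K|/|L\setminus K|$. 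These must somehow collapse to $\tfrac{d(K,L)}{a!\,b!}\sum_{r(K,F)=a,\,i\notin F}|\mu(K,F)|$. That is not a Pascal/Vandermonde identity; it would require a new combinatorial identity relating M\"obius values at adjacent ranks weighted by the fractional ground-set sizes, and nothing in the paper supplies it. Your sketch does not show this closes, and I am not convinced it does without substantial extra work.

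The paper sidesteps this entirely by differentiating in $t$ before expanding. Set $f(t)=\ff_K^L(s\alpha_K^L+t\beta_K^L)$; then $f'(t)=D_{\beta_K^L}\ff_K^L(\cdot)$, and by \eqref{modeq} together with the modular-invariance of $\ff_K^L$ (Lemma~\ref{lineality}) one may replace $\beta_K^L$ by the $0/1$-vector $\beta_{K,i}^L$. Since $\beta_{K,i}^L$ is supported exactly on the flats $F$ with $i\notin F$ and has unit entries there, the directional derivative is literally $\sum_{i\notin F}\partial_{t_F}$, with no fractional weights and with the $i\notin F$ restriction appearing for free. Lemma~\ref{derivativen} splits $\partial_{t_F}\ff_K^L$ as a product, Lemma~\ref{alpha-beta} reduces the arguments, and Lemmas~\ref{volalpha} and~\ref{volbeta} evaluate each factor; integrating and using $f(0)=s^{d(K,L)}/d(K,L)!$ finishes. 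That one differentiation is exactly what removes the bookkeeping you were stuck on. I'd suggest redoing your argument along those lines rather than trying to make the direct substitution into \eqref{f-def} work.
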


\begin{proof}
Let $f(t)= \ff_K^L(s\alpha_K^L +t\beta_K^L)$. Then by Lemmas \ref{derivativen}, \ref{alpha-beta}, \ref{volalpha} and \ref{volbeta}, \eqref{modeq} and the chain rule, 
$$
f'(t)= \sum_{\stackrel {K < F < L} {i \not \in F}}   |\mu(K,F)| \cdot \frac {t^{d(K,F)}} {d(K,F)!} \frac {s^{d(F,L)}}{d(F,L)!}. 
$$
The lemma follows since by Lemma \ref{volalpha}, 
$
f(0)=  s^{d(K,L)} /{d(K,L)!}.
$
\end{proof}

\begin{lemma}[Cor. 7.27, \cite{White87}]\label{red}
If $i \in E$ is not a loop, then 
\begin{equation}\label{rcf}
\overline{\chi}_{\MM}(t) = \sum_{F \not \ni i} \mu(\varnothing,F) t^{d(F,E)}. 
\end{equation}
\end{lemma}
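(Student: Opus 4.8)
The plan is to prove the equivalent polynomial identity
$$
\chi_{\MM}(t)=(t-1)\sum_{F\not\ni i}\mu(\varnothing,F)\,t^{d(F,E)},
$$
from which \eqref{rcf} follows by the definition of $\overline{\chi}_{\MM}$ and divisibility by $t-1$. Since $d(F,E)=r(F,E)-1$, the right-hand side equals
$$
\sum_{F\not\ni i}\mu(\varnothing,F)\,t^{r(F,E)}-\sum_{F\not\ni i}\mu(\varnothing,F)\,t^{r(F,E)-1},
$$
so subtracting the first of these sums from $\chi_{\MM}(t)=\sum_{F}\mu(\varnothing,F)t^{r(F,E)}$ reduces the claim to
$$
\sum_{F\ni i}\mu(\varnothing,F)\,t^{r(F,E)}=-\sum_{F\not\ni i}\mu(\varnothing,F)\,t^{r(F,E)-1},
$$
all sums being over flats of $\MM$.

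To prove this, I would organize the right-hand sum according to the flat $G=\mathrm{cl}_{\MM}(F\cup\{i\})$ attached to each flat $F$ with $i\notin F$. Since $F$ is a flat, $i\notin\mathrm{cl}_{\MM}(F)=F$, and hence $G$ covers $F$ in $\PPP$ and contains $i$; in particular $r(F,E)-1=r(G,E)$. Conversely, for a fixed flat $G\ni i$ the flats $F$ with $\mathrm{cl}_{\MM}(F\cup\{i\})=G$ are precisely the coatoms $F\prec G$ of the interval $[\varnothing,G]_\PPP$ with $i\notin F$. Regrouping the sum accordingly, the identity to be proved becomes
$$
\sum_{G\ni i}\mu(\varnothing,G)\,t^{r(G,E)}=-\sum_{G\ni i}\Bigl(\sum_{\substack{F\prec G\\ i\notin F}}\mu(\varnothing,F)\Bigr)t^{r(G,E)},
$$
so it suffices to show $\mu(\varnothing,G)=-\sum_{F\prec G,\,i\notin F}\mu(\varnothing,F)$ for every flat $G$ with $i\in G$.

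This last identity is an instance of Weisner's theorem. Indeed $[\varnothing,G]_\PPP$ is again the lattice of flats of a matroid, hence a semimodular lattice, and since $i$ is not a loop the flat $a:=\mathrm{cl}_{\MM}(\{i\})$ is an atom of $[\varnothing,G]_\PPP$ with $a\le G$. Applying Weisner's theorem with $x=\varnothing$, this $a$, and $y=G$ (the degenerate case $G=a$ being immediate) gives $\mu(\varnothing,G)=-\sum_{b}\mu(\varnothing,b)$, the sum being over coatoms $b\prec G$ with $a\not<b$; and for any flat $b$ one has $a\le b$ if and only if $i\in b$, so the condition $a\not<b$ is equivalent to $i\notin b$. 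This is exactly the required identity, and summing over all $G\ni i$ completes the proof.

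I expect the only real work to be the bookkeeping in the middle step: verifying that $F\mapsto\mathrm{cl}_{\MM}(F\cup\{i\})$ carries the flats avoiding $i$ onto the flats containing $i$, with the fiber over $G$ equal to $\{F\prec G:i\notin F\}$, and that the exponents of $t$ transform as claimed. (If $\MM$ has a loop, then $\chi_{\MM}=0$ and both sides of \eqref{rcf} vanish, so one may assume $\MM$ is loopless.)
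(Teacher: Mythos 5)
Your proof is correct. Note, however, that the paper does not prove Lemma~\ref{red} at all: it is stated as a citation to Cor.~7.27 of \cite{White87} and used as a black box, so there is no in-paper argument to compare against. Your reduction to the per-flat identity $\mu(\varnothing,G)=-\sum_{F\prec G,\,i\notin F}\mu(\varnothing,F)$ via the covering map $F\mapsto\mathrm{cl}_\MM(F\cup\{i\})$, followed by Weisner's theorem, is the standard route and fits naturally with the way the paper invokes Weisner in the proof of Lemma~\ref{volbeta}. All the intermediate claims check out: $G=\mathrm{cl}_\MM(F\cup\{i\})$ covers $F$ because $i\notin\mathrm{cl}_\MM(F)$, the fiber over $G$ is exactly $\{F\prec G:i\notin F\}$, and the rank shift $r(F,E)-1=r(G,E)$ matches the exponents.

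One small caution, inherited from the paper rather than introduced by you: the paper's statement of Weisner's theorem writes the condition as ``$a\not<b$,'' but the condition that actually makes the identity true (and that you need) is $a\not\leq b$. If one read ``$a\not<b$'' literally, then in the case that $a$ itself is a coatom of $y$ the term $b=a$ would be (wrongly) included. Your deduction ``$a\le b$ iff $i\in b$, hence $a\not<b$ iff $i\notin b$'' silently treats $\not<$ as $\not\leq$; the equivalence you actually proved is $a\not\leq b\iff i\notin b$, which is exactly what Weisner requires. So the argument is sound, but you should phrase the Weisner condition as $a\not\leq b$ to avoid the apparent non sequitur.
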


If $f$ is $\CCC$-Lorentzian and $\vv_1$ and $\vv_2$ are in the closure of  $\CCC$, then the bivariate polynomial 
$
f(s \vv_1+t\vv_2) = \sum_{k=0}^d \binom d k a_k  s^{d-k} t^{k}
$
is Lorentzian by Remark \ref{altdef} and the fact that the space of Lorentzian polynomials of degree $d$ is closed. By \cite[Example 2.26]{BH},  the sequence $\{a_k\}_{k=0}^d$ is \emph{log-concave}.

\begin{proof}[Proof of Theorem \ref{HRWconj}]
Let $b_k$  denote the absolute value of the coefficient in front of  $t^k$ in the reduced characteristic polynomial of $\MM$. Then 
$$
\sum_{k=0}^{r-1} \binom {r-1} k \cdot b_k \cdot s^{r-1-k}t^k = (r-1)! \cdot \ff_\varnothing^E (s\alpha_\varnothing^E +t\beta_\varnothing^E),
$$
by Theorem \ref{comp} and Lemma \ref{red}. The theorem now follows from Theorem \ref{mainChow} and the discussion above.  
\end{proof}

\section{The volume polynomial of the Chow ring of a matroid}\label{chowder}
Although it is not used in or proofs of Theorems \ref{mainChow} and \ref{HRWconj}, we prove in  this section that if $K<L$ are flats of a matroid $\MM$, then the polynomial $\ff_K^L$ is the volume polynomial of the Chow ring of the matroid obtained from $\MM$ by restricting to $L$, and then contracting $K$. 
The Chow ring of a matroid was introduced by Feichtner and Yuzvinsky in \cite{FY}, and  Adiprasito, Huh and Katz \cite{AHK} developed a Hodge theory for it. 
For our purposes it will be convenient to define the Chow ring of a nonempty open interval of the lattice of flats of a matroid, although this definition is easily seen to be equivalent to the Chow ring of a matroid as defined in \cite{AHK}. 

Let $K<L$ be flats of a matroid $\MM$. The \emph{Chow ring}, $A_K^L$ is defined as the quotient 
$$
\frac {\RR[x_F : K<F<L]} {I+J}, 
$$
where $I=I_K^L$ is the ideal generated by all quadratic monomial $x_Fx_G$, where $F$  and $G$ are any two non-comparable flats in $(K,L)$, and $J=J_K^L$ is the ideal generated by the linear forms
\begin{equation}\label{gener}
\sum_{F \ni i} x_F - \sum_{F \ni j} x_F, \mbox{ where $i$ and $j$ are any distinct elements of $L \setminus K$.} 
\end{equation}
The sums above are over $F$ such that $K<F<L$. 
The Chow ring is graded $A_K^L = \oplus_{k=0}^d (A_K^L)_k$, where $d=d(K,L)$, and where $(A_K^L)_d$ is isomorphic to $\RR$. Indeed, there is a well-defined isomorphism $\deg : (A_K^L)_d \to \RR$, 
\begin{equation}\label{dgr}
\deg(x_{F_1} x_{F_2} \cdots x_{F_d}) =1, \mbox{ for any flag } K\prec F_1 \prec F_2 \prec \cdots \prec F_d \prec L, 
\end{equation}
see \cite{AHK}. 

The \emph{volume polynomial}, $\vol_K^L(\ttt)$, of $A_K^L$ is defined by 
\begin{equation}\label{voldef}
\vol_K^L(\ttt)= \frac 1 {d!} \deg\left(\left( \sum_{K<F<L} x_Ft_F\right)^d\right), \ \ \ \mbox{ where } d=d(K,L).
\end{equation}

\begin{lemma}\label{mod-vol}
If $\ww \in \MOD_K^L$, then 
$\vol_K^L(\ttt+\ww)=\vol_K^L(\ttt)$ for all $\ttt \in \EE_K^L$.  
\end{lemma}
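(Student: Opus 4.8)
The plan is to pass to the Chow ring $A_K^L$ and use the defining relations \eqref{gener} of the ideal $J_K^L$. Recall from Section~\ref{posetsec} that any $\ww\in\MOD_K^L$ is determined by real numbers $w_e$, $e\in L\setminus K$, with $\sum_{e\in L\setminus K}w_e=0$ and $w_F=\sum_{e\in F\setminus K}w_e$ for all $K\subset F\subset L$; in particular $w_K=w_L=0$, so $\ttt+\ww\in\EE_K^L$ whenever $\ttt\in\EE_K^L$ and the statement makes sense.

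First I would fix $\ttt\in\EE_K^L$ and rewrite the degree-one element $\sum_{K<F<L}x_F w_F$ of $A_K^L$ by interchanging the order of summation:
$$
\sum_{K<F<L}x_F\,w_F=\sum_{K<F<L}x_F\sum_{e\in F\setminus K}w_e=\sum_{e\in L\setminus K}w_e\Bigl(\sum_{\substack{K<F<L\\ F\ni e}}x_F\Bigr).
$$
By the generators \eqref{gener} of $J_K^L$, the class $\xi_e:=\sum_{K<F<L,\,F\ni e}x_F$ in $(A_K^L)_1$ is independent of $e\in L\setminus K$; write $\xi$ for this common class. Hence, in $A_K^L$,
$$
\sum_{K<F<L}x_F\,w_F=\Bigl(\sum_{e\in L\setminus K}w_e\Bigr)\xi=0,
$$
because $\sum_{e\in L\setminus K}w_e=0$. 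Consequently $\sum_{K<F<L}x_F(t_F+w_F)=\sum_{K<F<L}x_F t_F$ as elements of $(A_K^L)_1$.

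From here I would raise both sides to the $d$-th power with $d=d(K,L)$, so that they agree in $(A_K^L)_d$, apply the degree isomorphism $\deg\colon(A_K^L)_d\to\RR$, and divide by $d!$; by the definition \eqref{voldef} this gives exactly $\vol_K^L(\ttt+\ww)=\vol_K^L(\ttt)$, and since $\ttt\in\EE_K^L$ was arbitrary the lemma follows. I do not expect a genuine obstacle: the argument is essentially a one-line computation in $A_K^L$, and the only substantive point is the observation, immediate from \eqref{gener}, that the ``hyperplane classes'' $\sum_{F\ni e}x_F$ coincide for all $e$ -- this is precisely what converts the modularity constraint $\sum_e w_e=0$ into the invariance of $\vol_K^L$. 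The only things to watch are the bookkeeping with the convention $t_K=t_L=0$ and making sure all sums run over $K<F<L$.
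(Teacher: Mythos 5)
Your proof is correct and takes essentially the same approach as the paper: both reduce the lemma to the observation that $\sum_{K<F<L} w_F x_F$ lies in the ideal $J_K^L$ (the paper phrases this via the generators $\alpha_{K,i}^L-\alpha_{K,j}^L$ of $\MOD_K^L$, while you expand $w_F=\sum_{e\in F\setminus K}w_e$ and use $\sum_e w_e=0$), and then invoke the definition of $\vol_K^L$.
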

\begin{proof}
The space $\MOD_K^L$ is generated by vectors of the form $\alpha_{K,i}^L -\alpha_{K,j}^L$, $i,j \in L\setminus K$. Hence, by the definition of the ideal $J$, 
$$
\sum_{K<F<L} w_F x_F=0 \mbox{ in } A_K^L, \ \ \ \mbox{ for any } \ww=(w_S) \in \MOD_K^L. 
$$
Thus $\vol_K^L(\ttt+\ww)=\vol_K^L(\ttt)$, by \eqref{voldef}.
\end{proof}

\begin{lemma}\label{tens}
If $K < F <L$ are flats, then there is a ring homomorphism $\phi_F : A_K^F \otimes A_F^L \rightarrow A_K^L$, such that 
$$
\deg(\phi_F(\xi \otimes \eta)) =  \deg(\xi) \cdot \deg(\eta),
$$ 
for all $\xi \in A_K^F$ and $\eta \in A_F^L$.
\end{lemma}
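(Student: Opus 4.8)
The plan is to construct $\phi_F$ explicitly on generators and then verify it respects the defining ideals and behaves correctly on the top-degree pieces. Writing $A_K^F = \RR[x_G : K < G < F]/(I_K^F + J_K^F)$ and $A_F^L = \RR[x_H : F < H < L]/(I_F^L + J_F^L)$, I would first define a ring homomorphism $\widetilde\phi_F$ from the polynomial ring $\RR[x_G : K<G<F] \otimes \RR[x_H : F<H<L]$ into $A_K^L$ by sending $x_G \otimes 1 \mapsto x_G$ and $1 \otimes x_H \mapsto x_H$; that is, each variable of the tensor factors maps to the corresponding variable of $A_K^L$ (note that every $G$ with $K<G<F$ and every $H$ with $F<H<L$ is in particular a flat strictly between $K$ and $L$, so this makes sense). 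The key step is then to check that $\widetilde\phi_F$ kills $I_K^F \otimes 1$, $1 \otimes I_F^L$, $J_K^F \otimes 1$, and $1 \otimes J_F^L$, so that it descends to the claimed $\phi_F$ on $A_K^F \otimes A_F^L$.

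For the monomial relations $I$: if $G, G'$ are non-comparable in $(K,F)$, then they are also non-comparable in $(K,L)$, so $x_G x_{G'} = 0$ in $A_K^L$; likewise for non-comparable $H, H'$ in $(F,L)$. For the linear relations $J$: a generator of $J_K^F$ has the form $\sum_{K<G<F,\, G\ni i} x_G - \sum_{K<G<F,\, G\ni j} x_G$ for $i,j \in F\setminus K$. This is the image under $\pi$-type reasoning of... — more precisely, I would argue that in $A_K^L$ one has the relation $\sum_{K<G<F,\, G\ni i} x_G = \sum_{K<G<F,\, G\ni j} x_G$ by combining the $J_K^L$-relation $\sum_{K<F'<L,\, F'\ni i}x_{F'} = \sum_{K<F'<L,\, F'\ni j}x_{F'}$ with the monomial relations, since for a flat $F'$ with $K<F'<L$ that is \emph{not} contained in $F$, the element $F'$ is non-comparable with $F$ unless $F'>F$, and... here I need to be careful. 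The clean way: pick $i,j \in F\setminus K$; then in $A_K^L$, $\sum_{F'\ni i}x_{F'} - \sum_{F'\ni j}x_{F'} = 0$, and I split each sum according to whether $F' \leq F$, $F' > F$, or $F'$ incomparable to $F$. Flats $F' \geq F$ contain all of $F\setminus K$, hence both $i$ and $j$, so those terms cancel in the difference. For $F'$ incomparable to $F$: this is the delicate case, but I expect that one can further quotient — actually the correct statement is that $\phi_F$ need not be injective, so I only need the image relations to hold, and I would instead verify the $J$-relations by using the alternative presentation of the Chow ring or by the standard fact (from \cite{AHK, FY}) that $A_K^L$ contains $A_K^F \otimes A_F^L$ as the subring generated by $\{x_G : K<G<F\} \cup \{x_H : F<H<L\}$ via this very map.

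Once $\phi_F$ is shown to be well-defined, the degree identity is the remaining point. Both $(A_K^F)_{d(K,F)}$ and $(A_F^L)_{d(F,L)}$ are one-dimensional with the isomorphisms $\deg$ normalized by flags as in \eqref{dgr}, and $d(K,F)+d(F,L) = d(K,L)$. So it suffices to evaluate $\deg(\phi_F(\xi\otimes\eta))$ on a single pair of nonzero top-degree elements, say $\xi = x_{G_1}\cdots x_{G_{d(K,F)}}$ for a flag $K\prec G_1 \prec \cdots \prec G_{d(K,F)} \prec F$ and $\eta = x_{H_1}\cdots x_{H_{d(F,L)}}$ for a flag $F \prec H_1 \prec \cdots \prec H_{d(F,L)} \prec L$. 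Then $\phi_F(\xi\otimes\eta) = x_{G_1}\cdots x_{G_{d(K,F)}} x_{H_1}\cdots x_{H_{d(F,L)}}$, and $K\prec G_1\prec\cdots\prec G_{d(K,F)}\prec F \prec H_1\prec\cdots\prec H_{d(F,L)}\prec L$ is a maximal flag in $(K,L)_\PPP$, so by \eqref{dgr} its degree is $1 = \deg(\xi)\cdot\deg(\eta)$. By multilinearity and the fact that $\phi_F$ restricted to the product of the top graded pieces lands in $(A_K^L)_{d(K,L)}$, the identity $\deg(\phi_F(\xi\otimes\eta)) = \deg(\xi)\deg(\eta)$ extends to all $\xi,\eta$.

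The main obstacle I anticipate is verifying well-definedness against $J_K^F$ and $J_F^L$ — specifically handling the flats incomparable to $F$ in the linear-relation bookkeeping. I would resolve this by invoking the standard description of $A_K^L$ from \cite{AHK} (or \cite{FY}) of its monomial basis and the product structure, rather than by a bare-hands manipulation of the ideal generators; with that in hand, the existence of $\phi_F$ as the natural multiplication map and the degree formula both follow cleanly.
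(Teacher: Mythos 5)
Your map is off by a factor of $x_F$, and this is not a small oversight: it breaks both the well-definedness and the degree formula. The paper defines $\phi_F$ on the level of polynomial rings by $\phi_F(\xi \otimes \eta) = \xi \,x_F\, \eta$ (not $\xi\eta$). The inserted $x_F$ is exactly what resolves the ``delicate case'' you flagged. Take a generator $\xi = \sum_{K<G<F,\,G\ni i} x_G - \sum_{K<G<F,\,G\ni j} x_G$ of $J_K^F$ with $i,j\in F\setminus K$, and compare $\xi x_F$ with $x_F$ times the corresponding generator $\tilde\xi = \sum_{K<G<L,\,G\ni i} x_G - \sum_{K<G<L,\,G\ni j} x_G$ of $J_K^L$. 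In the difference $x_F\tilde\xi - x_F\xi$, the terms with $G\geq F$ cancel because $i,j\in F\subseteq G$, and the terms with $G$ incomparable to $F$ lie in $I_K^L$ precisely because of the factor $x_F$ (as $x_G x_F\in I_K^L$). So $\xi x_F \in I_K^L+J_K^L$. Without the $x_F$, the incomparable terms do not vanish and the map does not descend; the ``standard fact'' you hope to invoke (that $A_K^F\otimes A_F^L$ embeds in $A_K^L$ via variable-to-variable) is not available.

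The degree formula also fails for your map on the nose. If $\xi$ has degree $d(K,F)$ and $\eta$ has degree $d(F,L)$, then $\xi\eta$ has degree $d(K,F)+d(F,L)=d(K,L)-1$, which is not the top degree of $A_K^L$, so $\deg$ is not even defined there. Your own flag computation betrays this: the monomial $x_{G_1}\cdots x_{G_{d(K,F)}} x_{H_1}\cdots x_{H_{d(F,L)}}$ has one fewer factor than a maximal flag of $(K,L)_\PPP$ requires, because $x_F$ is missing. With the paper's map, $\phi_F(\xi\otimes\eta)=\xi x_F\eta$ lands in degree $d(K,F)+1+d(F,L)=d(K,L)$, and on flag monomials it gives exactly the monomial of the concatenated maximal flag $K\prec G_1\prec\cdots\prec G_{d(K,F)}\prec F\prec H_1\prec\cdots\prec H_{d(F,L)}\prec L$, so \eqref{dgr} applies and gives $\deg=1$ as needed. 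In short: the idea of building $\phi_F$ on generators and checking the ideals is right, but the single essential ingredient --- multiplication by $x_F$ --- is missing, and every difficulty you encountered stems from that omission.
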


\begin{proof}
First define  
$\phi_F : \RR[x_G : K<G<F] \otimes  \RR[x_G : F<G<L] \to A_K^L$,  by $\phi_F(\xi \otimes \eta) =\xi x_F \eta$.  
Then 
$$\xi \in I_K^F+J_K^F \mbox{ implies } \xi x_F \in  I_K^L+J_K^L, \mbox{ and } \eta \in I_F^L + J_F^L   \mbox{ implies } x_F\eta \in  I_K^L+J_K^L,$$
so that $\phi_F$ defines a homomorphism  $\phi_F : A_K^F \otimes A_F^L \rightarrow A_K^L$. 

The identity for the degrees now follows from \eqref{dgr}. 
\end{proof}

\begin{theorem}\label{chowVol}
For any two flats  $K < L$, $\vol_K^L= \ff_K^L$. 
\end{theorem}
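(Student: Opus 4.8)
The plan is to prove Theorem~\ref{chowVol} by induction on $d = d(K,L)$, showing that $\vol_K^L$ satisfies the same recursion \eqref{f-def} that defines $\ff_K^L$, together with the same base case. The base case $d(K,L) = 0$ is immediate: the Chow ring $A_K^L$ is then $\RR$ in degree $0$, so $\vol_K^L = 1 = \ff_K^L$. For the inductive step, the key observation is that by Lemma~\ref{Euler-cons} it suffices to compute the partial derivatives $\partial_{t_F}\vol_K^L$ and show that they match $\partial_{t_F}\ff_K^L = \ff_K^F(\pi_K^F(\ttt))\cdot\ff_F^L(\pi_F^L(\ttt))$ from Lemma~\ref{derivativen}. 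Actually, to invoke Lemma~\ref{Euler-cons} one needs to exhibit, for each $F$ with $K < F < L$, a polynomial $Q_F$ of degree $d-1$ with $d\cdot\vol_K^L = \sum_F t_F Q_F$ and $\partial_i Q_j = \partial_j Q_i$; the natural candidate is $Q_F = \vol_K^F(\pi_K^F(\ttt))\cdot\vol_F^L(\pi_F^L(\ttt))$, which by induction equals $\ff_K^F(\pi_K^F(\ttt))\cdot\ff_F^L(\pi_F^L(\ttt))$, and the cross-partial symmetry then follows from the already-established identity for $\ff_K^L$ (or can be checked directly exactly as in the proof of Lemma~\ref{derivativen}).

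The main work is therefore to establish the Euler-type identity
$$
d!\cdot\vol_K^L(\ttt) = \deg\left(\left(\sum_{K<F<L} x_F t_F\right)^d\right) = \sum_{K<F<L} t_F\cdot\deg\left(x_F\left(\sum_{K<G<L}x_G t_G\right)^{d-1}\right),
$$
where the second equality is just the multinomial expansion, and then to identify the inner degree with $(d-1)!\cdot\vol_K^F(\pi_K^F(\ttt))\cdot\vol_F^L(\pi_F^L(\ttt))$. This is where Lemma~\ref{tens} enters. Multiplication by $x_F$ in $A_K^L$ restricted to the top-degree computation factors through the homomorphism $\phi_F : A_K^F\otimes A_F^L \to A_K^L$ of Lemma~\ref{tens}: one must show that $x_F\cdot\left(\sum_{K<G<L}x_G t_G\right)^{d-1}$, evaluated under $\deg$, equals $\phi_F$ applied to $\left(\sum_{K<G<F}x_G\, s_G\right)^{d-1-?}\otimes(\cdots)$ — more precisely, one expands $\left(\sum_G x_G t_G\right)^{d-1}$, discards all monomials that are killed after multiplying by $x_F$ (which, by the Poincar\'e-duality structure reflected in \eqref{dgr}, are precisely those not supported on a flag through $F$), and rewrites the surviving sum. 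The subtle point is that in $A_K^L$ the class $x_G$ for $G$ incomparable to $F$ multiplies to zero against $x_F$, and for $G$ comparable to $F$ the class $x_G$ must be rewritten using the linear relations in $J_K^L$ so that it corresponds, under $\phi_F$, to the projected variable: this is exactly what the projections $\pi_K^F$ and $\pi_F^L$ are designed to record. I expect verifying this rewriting — that the coefficient bookkeeping produced by $\pi_K^F$ and $\pi_F^L$ matches the relations defining $J_K^F$ and $J_F^L$ inside $A_K^L$ — to be the main obstacle.

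Concretely, the cleanest route is: (i) use Lemma~\ref{mod-vol} to reduce, if convenient, to representatives of $\ttt$ with a chosen normalization; (ii) for fixed $F$, note that $x_F\cdot x_G = 0$ in $A_K^L$ whenever $F,G$ are incomparable, so only $G$ with $G < F$ or $G > F$ survive in $x_F\left(\sum_G x_G t_G\right)^{d-1}$; (iii) invoke Lemma~\ref{tens}: there exist elements $\xi(\ttt)\in(A_K^F)_1$ and $\eta(\ttt)\in(A_F^L)_1$ such that $\phi_F(\xi(\ttt)\otimes 1 + 1\otimes\eta(\ttt)) = \sum_{G\;\mathrm{comp.}\;F} x_F x_G t_G$ up to elements of $I+J$, and identify $\xi(\ttt)$ with $\sum_{K<G<F} x_G\,(\pi_K^F(\ttt))_G$ and $\eta(\ttt)$ with $\sum_{F<G<L} x_G\,(\pi_F^L(\ttt))_G$ — this is precisely the content that must be checked against the generators \eqref{gener}; (iv) then
$$
\deg\!\left(x_F\Big(\sum_G x_G t_G\Big)^{d-1}\right) = \deg_{A_K^F}\!\big(\xi(\ttt)^{d(K,F)}\big)\cdot\deg_{A_F^L}\!\big(\eta(\ttt)^{d(F,L)}\big)\cdot\binom{d-1}{d(K,F)}^{-1}\!\!\cdot(\cdots),
$$
after collecting the multinomial coefficient from separating the $F$-below and $F$-above factors, using $d(K,F)+d(F,L) = d-1$; (v) by definition \eqref{voldef} this equals $(d-1)!\cdot\vol_K^F(\pi_K^F(\ttt))\cdot\vol_F^L(\pi_F^L(\ttt))$; (vi) conclude $\partial_{t_F}\vol_K^L = \vol_K^F(\pi_K^F(\ttt))\cdot\vol_F^L(\pi_F^L(\ttt))$, and hence $\vol_K^L = \ff_K^L$ by Lemma~\ref{Euler-cons}, Lemma~\ref{derivativen} and induction. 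The verification in step (iii) that $\pi_K^F$ and $\pi_F^L$ are exactly the right projections is the heart of the matter; the formula for $\pi_F^G$ given in Section~\ref{posetsec} was presumably reverse-engineered precisely so that this identification holds, so the proof amounts to checking that the defining relations of $A_K^F$ and $A_F^L$ pull back correctly through $\phi_F$.
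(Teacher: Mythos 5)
Your outline is essentially the paper's argument: induction on $d(K,L)$, reduce via Lemma~\ref{Euler-cons} to showing $\partial_{t_F}\vol_K^L = \vol_K^F(\pi_K^F(\ttt))\cdot\vol_F^L(\pi_F^L(\ttt))$, kill incomparable monomials with the Stanley--Reisner relations $I$, and split the degree map with Lemma~\ref{tens}. The one place you diverge is at the step you correctly identify as ``the heart of the matter'' — checking that the affine projections $\pi_K^F, \pi_F^L$ are exactly what emerge from the linear relations $J_K^L$ after multiplying by $x_F$. You propose to do this by directly rewriting $x_G$ (for $G$ comparable to $F$) modulo $J_K^L$ and matching coefficients, which would work but is the laborious route. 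The paper instead exploits your step~(i) to make this painless: it first computes $\partial_{t_F}\vol_K^L\big|_{t_F=0} = \vol_K^F(\ttt)\cdot\vol_F^L(\ttt)$ (no projections appear, because at $t_F=0$ the $x_F^2$ terms are absent and $\pi_K^F(\ttt), \pi_F^L(\ttt)$ reduce to plain restrictions of $\ttt$), and then propagates this to all $\ttt$ using Lemma~\ref{mod-vol}: the translation $\ttt \mapsto \ttt - t_F(\alpha_{K,i}^L - \alpha_{K,j}^L)$ by a modular vector kills the $F$-coordinate while leaving $\vol_K^L$ unchanged, and writing out what this substitution does to the other coordinates produces exactly the $\pi_K^F$ and $\pi_F^L$ formulas. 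So the projections need not be verified against the ideal $J$ at all; they fall out of the modular invariance. If you follow through on your step~(i) — normalize $t_F=0$ first — instead of carrying the projections through steps (ii)--(v), the ``subtle point'' you flag evaporates and you land on the paper's proof.
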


\begin{proof}
We prove that $\vol_K^L$ satisfies the recursion \eqref{splitt}. Since $\vol_K^L= \ff_K^L$ whenever $d(K,L)=0$, the theorem will follow by induction and Euler's identity. 

Assume $d=d(K,L)>1$. Then 
$$
\frac \partial {\partial t_F} \vol_K^F(\ttt) \Big |_{t_F=0} = \frac 1 {(d-1)!} \sum_{k=0}^{d-1} \binom {d-1} k \cdot \deg (\xi^k x_F \eta^{d-1-k}), 
$$
where $\xi= \sum_{K<G<F} t_Gx_G$ and $\eta = \sum_{F<G<L} t_Gx_G$. If $k>d(K,F)$, then we have $\xi^k x_F\eta^{d-1-k}=0$ since $\phi_F$ is a homomorphism. Similarly $\xi^{d-1-j}x_F\eta^j =0$ if $j > d(F,L)$. Thus 
$$
\frac \partial {\partial t_F} \vol_K^L(\ttt) \Big |_{t_F=0}= \frac {\deg(\xi^{d(K,F)}x_F \eta^{d(F,L)})}{d(K,F)! \cdot d(F,L)!}=
\frac {\deg(\xi^{d(K,F)})}{d(K,F)!} \cdot \frac {\deg(\eta^{d(F,L)})}{d(F,L)!},
$$
by Lemma \ref{tens}. 
Hence 
$
\partial_{t_F} \vol_K^L(\ttt) \big |_{t_F=0}= \vol_K^F(\ttt) \cdot \vol_F^L(\ttt). 
$
Let $i \in F\setminus K$ and $j \in L\setminus F$. The $F$-entry of $\ttt-t_F(\alpha_{K,i}^L-\alpha_{K,j}^L)$ is zero. Hence by Lemma \ref{mod-vol}, 
$$
\frac \partial {\partial t_F} \vol_K^L(\ttt)=\frac \partial {\partial t_F} \vol_K^L(\ttt-t_F(\alpha_{K,i}^L-\alpha_{K,j}^L))= \vol_K^F(\ttt-t_F\alpha_{K,i}^L) \cdot \vol_F^L(\ttt-t_F\beta_{K,i}^L),
$$
which proves that $\vol_K^F(\ttt)$ satisfies the desired recursion. 
\end{proof}
\noindent 
\textbf{\large Acknowledgements}. 
The second author is grateful to Chris Eur and Mohan Ravichandran for
helpful discussions. The first author is a Wallenberg Academy Fellow supported by the Knut and Alice Wallenberg foundation and the G\"oran Gustafsson foundation. The second author is funded by the Deutsche Forschungsgemeinschaft (DFG, German Research Foundation) under Germany's Excellence Strategy -- The Berlin Mathematics Research Center MATH+ (EXC-2046/1, project ID: 390685689).

\end{document}